\newtheorem{thm}{Theorem}[section]
\newtheorem{lem}[thm]{Lemma}
\theoremstyle{definition}
\newtheorem{defin}[thm]{Definition}
\newtheorem{rem}[thm]{Remark}
\numberwithin{equation}{section}
 \newcommand{\setC}{\mathbb{C}}
 \newcommand{\setN}{\mathbb{N}}
 \newcommand{\setQ}{\mathbb{Q}}
 \newcommand{\setR}{\mathbb{R}}
 \newcommand{\setZ}{\mathbb{Z}}
\begin{document}

\baselineskip=17pt

\title[]
{New insight into results of Ostrowski and Lang\\
on sums of remainders using Farey sequences}

\author{Matthias Kunik}
\address{Universit\"{a}t Magdeburg\\
IAN \\
Geb\"{a}ude 02 \\
Universit\"{a}tsplatz 2 \\
D-39106 Magdeburg \\
Germany}
\email{matthias.kunik@ovgu.de}

\date{\today}

\begin{abstract}
The sums $S(x,t)$ of the centered remainders $kt-\lfloor kt\rfloor - 1/2$ over $k \leq x$
and corres\-ponding Dirichlet series
were studied by A. Ostrowski, E. Hecke, H. Behnke and S. Lang
for fixed real irrational numbers $t$. Their work was originally inspired by Weyl's equidistribution results modulo 1 for sequences in number theory.

In a series of former papers we obtained limit functions which describe scaling pro\-per\-ties of the Farey sequence of order $n$ for $n \to \infty$  in the vicinity of any fixed
fraction $a/b$ and which are independent of $a/b$. We extend this theory on the sums $S(x,t)$
and also obtain a scaling behaviour with a new limit function. 
This method leads to a refinement of results given by Ostrowski and Lang and establishes
a new proof for the analytic continuation of related Dirichlet series.
We will also present explicit relations to the theory of Farey sequences.
\end{abstract}

\subjclass[2010]{11B57,11M06,11M41,44A20,11K60}

\keywords{Farey sequences, Riemann zeta function, Dirichlet series, Mellin transform, 
Diophantine approximation}

\maketitle

\section{Introduction} \label{farey_intro}

In \cite{Weyl1} Hermann Weyl developed a general and far-reaching theory for the equidistribution of sequences modulo 1, which is discussed from a historical point of view in Stammbach's paper \cite{Stammbach}. Especially Weyl's result that for real $t$ the sequence 
$
t ,\, 2t ,\, 3t,\,\ldots
$
is equidistributed modulo 1 if and only if $t$ is irrational
can be found in \cite[\S 1]{Weyl1}. This means that
\begin{equation*}
\lim \limits_{N \to \infty} \frac{1}{N} 
\#\left\{nt -\lfloor nt \rfloor \in [a,b] : n \leq N \right\}=b-a
\end{equation*}
holds for all subintervals $[a,b] \subseteq [0,1]$
if and only if $t \in \setR \setminus \setQ$.
Here $\#M$ denotes the number of elements of a finite set $M$.
This generalization of Kronecker's Theorem \cite[Chapter XXIII, Theorem 438]{hw} is an important result in number theory. We have only mentioned its one dimensional version, but the higher dimensional case is also treated in Weyl's paper.

Now we put 
\begin{equation}\label{Sxdef}
S(x,t)=\sum \limits_{k \leq x}
\left(kt -\lfloor kt \rfloor-\frac12\right)
\end{equation} 
for $x \geq 0$ and $t \in \setR$.
If the sequence $(n t)_{n \in \setN}$ is 
``well distributed" modulo 1 for irrational $t$, then $|S(x,t)|/x$ should be ``small"
for $x$ large enough. 

In \cite[Equation (2), p. 80]{Ost1922} Ostrowski used the continued fraction expansion $t=\langle \lambda_0,\lambda_1,\lambda_2,\ldots \rangle$ for irrational $t$ and presented a very efficient calculation of $S(n,t)$ with $n \in \setN_0$. He namely obtained a simple iterative procedure using
at most $\mathcal{O}(\log n)$ steps for $n \to \infty$, uniformly in $t \in \setR$. We have summarized his result in Theorem \ref{ostrowski} of the paper on hand.
From this theorem he derived an estimate for $S(n,t)$ in the case of irrational 
$t \in \setR$ which depends on the choice of $t$.
Especially if $(\lambda_k)_{k \in \setN_0}$ is a bounded sequence, then we say that $t$ has \textit{bounded partial quotients}, and have in this case from Ostrowski's paper 
\begin{equation}\label{ostopt}
|S(n,t)| \leq C(t) \log n \,,\quad n \geq 2\,,
\end{equation} 
with a constant $C(t)>0$ depending on $t$\,. Ostrowski also showed that this 
gives the best possible result, answering an open question posed by Hardy and Littlewood.\\

In \cite{Lang1966} and \cite[III,\S 1]{Lang1995} Lang obtained
for every fixed $\varepsilon > 0$ that 
\begin{equation}\label{lang_fast_ueberall}
|S(n,t)| \leq\left(\log n \right)^{2+\varepsilon} \quad
\mbox{~for ~} n \geq n_0(t,\varepsilon)
\end{equation}
for almost all $t \in \setR$ with a constant $n_0(t,\varepsilon) \in \setN$.
Let $\alpha$ be an irrational real number and $g \geq 1$ be an increasing function,
defined for sufficiently large positive numbers. Due to Lang
\cite[II,\S 1]{Lang1995} the number $\alpha$ is of type $\leq g$ if for all sufficiently large numbers $B$, there exists a solution in relatively prime integers $q,p$ of the inequalities
$$
|q\alpha-p|<1/q\,, \quad B/g(B)\leq q <B\,.
$$
After Corollary 2 in \cite[II,\S 3]{Lang1995}, where Lang studied the quantitative connection between Weyl's equidistribution modulo 1 for the sequence $t,2t,3t,\ldots$ and the type of the irrational number $t$, he mentioned the work of Ostrowski \cite{Ost1922} and Behnke \cite{Behnke} and wrote:
"Instead of working with the type as we have defined it, however,
these last-mentioned authors worked with a less efficient way of determining the approximation behaviour of $\alpha$ with respect to $p/q$, whence followed weaker results and more complicated proofs." \\

Though Lang's theory gives Ostrowski's estimate \eqref{ostopt}
for all real irrational numbers $t$ with bounded partial quotients, 
see \cite[II, \S 2, Theorem 6 and III,\S 1, Theorem 1]{Lang1995}, as well as
estimate \eqref{lang_fast_ueberall} for almost all $t \in \setR$, Lang did not use
Ostrowski's efficient formula for the calculation of $S(n,t)$\,.
We will see in Section \ref{dirichlet_section} of the paper on hand that Ostrowski's formula can be used as well in order to derive estimate \eqref{lang_fast_ueberall} for almost all $t \in \setR$, without working with the type defined in \cite[II,\S 1]{Lang1995}. For this purpose we will present the general and useful Theorem \ref{kettenabschaetz}, which will be derived in Section \ref{sawtooth} from the elementary theory of continued fractions. Our resulting new Theorems \ref{B0_mass_almost_everywhere},
\ref{B0_mass} now have the advantage to provide an explicit form for those sets of $t$-values
which satisfy crucial estimates of $S(n,t)$.\\
If $\Theta:[1,\infty) \to [1,\infty)$ is any monotonically increasing function
with $\begin{displaystyle}\lim \limits_{n \to \infty} \Theta(n)=\infty\end{displaystyle}$\,,
then Theorem \ref{B0_mass} gives the inequality
$
|S(n,t)| \leq 2 \log^2(n) \Theta(n)
$
uniformly for all $n \geq 3$ and all $t \in \mathcal{M}_n$ for a sequence of sets
$\mathcal{M}_n \subseteq [0,1]$ with $\lim \limits_{n \to \infty}|\mathcal{M}_n|=1$.
Here $|\mathcal{M}_n|$ denotes the Lebesgue-measure of $\mathcal{M}_n$.
On the other hand Theorem \ref{Bx_L2} states that
$$
\left(\int \limits_{0}^{1}S(n,t)^2\,dt\right)^{1/2}=\mathcal{O}(\sqrt{n}) 
\quad \text{for~}n \to \infty
$$
gives the true order of magnitude for the $L_2(0,1)$-norm of $S(n,\cdot)$.
If $\Theta$ increases slowly then the values of $S(n,t)$ with $t$ in the unit-interval $[0,1]$ 
which give the major contribution to the $L_2(0,1)$-norm have their pre-images only in the small complements $[0,1] \setminus \mathcal{M}_n$. We see that $n_0(t,\varepsilon)$ 
in estimate \eqref{lang_fast_ueberall} 
depends substantially on the choice of $t$.
Moreover, a new representation formula for $B_n(t)=S(n,t)/n$
given in Section \ref{sawtooth}, Theorem \ref{Bx_thm} will also give an alternative proof of Ostrowski's estimate \eqref{ostopt} if $t$ has bounded partial quotients.
In this way we summarize and refine the corresponding results given by Ostrowski and Lang,
respectively.\\

For $n \in \setN$ and $N = \sum\limits_{k=1}^{n}\varphi(k)$
with Euler's totient function $\varphi$ 
the Farey sequence $\mathcal{F}_n$ of order $n$ consists of all reduced and ordered fractions
\begin{equation*}
\frac{0}{1}=\frac{a_{0,n}}{b_{0,n}} < \frac{a_{1,n}}{b_{1,n}} < \frac{a_{2,n}}{b_{2,n}} 
< \ldots < \frac{a_{N,n}}{b_{N,n}}=\frac{1}{1}
\end{equation*}
with $1 \leq b_{\alpha,n} \leq n$ for $\alpha = 0,1, \ldots,N$.
By $\mathcal{F}^{ext}_n$ we denote the extension of $\mathcal{F}_n$
consisting of all reduced and ordered fractions $\frac{a}{b}$ with $a \in \setZ$ 
and $b \in \setN$, $b \leq n$.

In the former paper \cite{Ku4} we have studied 1-periodic functions 
$\Phi_n : \setR \to \setR$ which are related to the Farey sequence $\mathcal{F}_n$,
based on the theory developed in \cite{Ku1, Ku2, Ku3} for related functions.
For $k \in \setN$ and $x>0$ we use the M\"obius function $\mu$ and define
the 1-periodic functions $q_k, \Phi_x : \setR \to \setR$ given by
\begin{equation}\label{familien}
\begin{split}
q_k(t) &=-\sum \limits_{d | k} \mu(d)\,
\beta \left(\frac{kt}{d}\right) ~\,
\mbox{with}~\, \beta(t)=t-\lfloor t \rfloor - \frac12\,,\\
\Phi_x\left(t \right) &= \frac{1}{x} \sum \limits_{k \leq x} 
q_k(t)=- \frac{1}{x} \sum \limits_{j \leq x}
\sum \limits_{k \leq x/j} \mu(k) \beta\left(jt \right)\,.\\
\end{split}
\end{equation}
The functions $\Phi_x$ determine the number of Farey fractions in prescribed intervals.
More precisely, $t\sum\limits_{k \leq n}\varphi(k)+n\Phi_n(t)+\frac12$ gives the number
of fractions of $\mathcal{F}^{ext}_n$ in the interval $[0,t]$ 
for $t \geq 0$ and $n \in \setN$. Moreover, there is a connection between the functions $S(x,t)$ and $\Phi_x(t)$ via the Mellin-transform and the Riemann-zeta function, namely the relation
\begin{equation*}
\int \limits_{1}^{\infty} \frac{S(x,t)}{x^{s+1}}dx
=-\zeta(s)\,\int \limits_{1}^{\infty} \frac{\Phi_x(t)}{x^s}dx\,,
\end{equation*}
valid for $\Re(s)>1$ and any fixed $t \in \setR$.
We will use it in a modified form in Theorem \ref{F_thm}.

In contrast to Ostrowski's approach using elementary evaluations of $S(n,t)$ 
for real values of $t$, Hecke \cite{Hecke} considered the case of special quadratic irrational numbers $t$, studied the analytical properties of the corresponding Dirichlet series
\begin{equation}\label{Hecke_series}
\sum \limits_{m=1}^{\infty}\frac{mt-\lfloor mt \rfloor-\frac12}{m^s}=
s\int \limits_{1}^{\infty} \frac{S(x,t)}{x^{s+1}}dx
\end{equation}
and obtained its meromorphic continuation to the whole complex plane, 
including the location of poles. Hecke could use his analytical method to derive estimates 
for $S(n,t)$, but he did not obtain
Ostrowski's optimal result \eqref{ostopt}
for real irrationalities $t$ with bounded partial quotients.

For positive irrational numbers $t$ 
Sourmelidis \cite{Sourmelidis} studied analytical relations 
between the Dirichlet series in \eqref{Hecke_series} and the so called Beatty zeta-functions and Sturmian Dirichlet series.

For $x>0$ we set
\begin{equation*}
r_x =\frac{1}{x} \sum \limits_{k \leq x} \varphi(k)\,,
\quad
s_x = \sum \limits_{k \leq x} \frac{\varphi(k)}{k} \,,
\end{equation*}
and define the \textit{continuous} and odd function $h : \setR \to \setR$ by
\begin{equation*}
h(x)=\begin{cases}
0 &\ \text{for}\ x =0 \,,\\
3x/\pi^2+r_x-s_x &\ \text{for}\ x >0 \,,\\
-h(-x) &\ \text{for}\ x < 0\, .\\
\end{cases}
\end{equation*}
Then we obtained in \cite[Theorem 2.2]{Ku4} 
for any fixed reduced fraction $a/b$ with $a \in \setZ$ and $b \in \setN$
and any $x_*>0$ that for $n \to \infty$
$$
\tilde{h}_{a,b}(n,x)=-b \, \Phi_n\left(\frac{a}{b}+\frac{x}{bn}\right) 
$$
converges uniformly to $h(x)$ for $-x_* \leq x \leq x_*$.
For this reason we have called $h$ a limit function.
It follows from \cite[Theorem 1]{Mont} with an absolute constant $c>0$ 
for $x\geq 2$ that
$\begin{displaystyle}
h(x) = \mathcal{O}\left( e^{-c\sqrt{\log x}}\right)\,.
\end{displaystyle}$
Plots of this limit function are presented in
Section \ref{appendix}, Figures \ref{h25eps},\ref{h50eps},\ref{h500eps}.

In Section \ref{sawtooth} we introduce another limit function ${\tilde \eta} : \setR \to \setR$ defined by ${\tilde \eta}(0)=-\frac12$ and
\begin{equation*}
{\tilde \eta}(x)=
\frac{(x-\lfloor x \rfloor)(x-\lfloor x \rfloor-1)}{2x}
 \quad \text{for}\ x \in \setR \setminus \{0\}\,,
\end{equation*}
and obtain from Theorem \ref{rescaled_limit} for $B_n(t)=S(n,t)/n$
analogous to \cite[Theorem 2.2]{Ku4} the new result that for $n \to \infty$
$$
\tilde{\eta}_{a,b}(n,x)= b \, B_n\left(\frac{a}{b}+\frac{x}{bn}\right) 
$$
converges uniformly to $\tilde{\eta}(x)$ for $-x_* \leq x \leq x_*$.
A plot of  ${\tilde \eta}(x)$ for $-8 \leq x \leq 8$ is given in
Section \ref{appendix}, Figure \ref{eta8eps}.
Now Theorem \ref{Bx_thm}(b) follows from part (a) and leads to
the formula \eqref{Bsequence}, which bears a strong resemblance 
to that in Ostrowski's Theorem \ref{ostrowski} and gives an alternative proof for Ostrowski's estimate \eqref{ostopt} if $t$ has bounded partial quotients.
Hence it would be interesting to know whether there is a deeper reason for this analogy.

\section{Sums with sawtooth functions}\label{sawtooth}

With the sawtooth function $\beta(t)=t-\lfloor t \rfloor -\frac12$ we define
for $x>0$ the 1-periodic
functions $B_{x}: \setR \to \setR$ by
\begin{equation}\label{Bxdef}
B_{x}\left(t \right)= \frac{1}{x}\sum \limits_{k \leq x} \beta(kt)\,.
\end{equation}

Next we will state \cite[Theorem 2.2]{Ku3} which,
amongst other things, connects the study of the functions $B_x$
with the theory of Farey fractions. 

\begin{thm}\label{farey_thm2} \cite[Theorem 2.2]{Ku3}
Assume that $\frac{a}{b} < \frac{a^*}{b^*}$
are consecutive reduced fractions in the extended Farey sequence $\mathcal{F}^{ext}_b$ 
of order $b \leq n$ with $b,b^*,n \in \setN$. For $q \geq 0$ we define
\begin{equation}\label{xip}
\xi_+(q) = \frac{a^*+aq}{b^*+bq}\,, \qquad x_+(q) = \frac{n}{b^*+bq}\,,
\end{equation}
and see that its inverse functions
\begin{equation*}
\xi_+^{-1}(\xi) = \frac{a^*-b^*\xi}{b\xi -a}\,, \qquad x_+^{-1}(x) = \frac{n/x-b^*}{b}\,
\end{equation*}
are defined for $a/b < \xi \leq a^*/b^*$ and $0 < x \leq n/b^*$, respectively.
\begin{itemize}
\item[(a)]
We assume that 
\begin{equation*}
\frac{a}{b} < \frac{A}{B} \leq \frac{a^*}{b^*}
\end{equation*}
with the reduced fraction $\xi=\frac{A}{B} \in \mathcal{F}^{ext}_n$, $A \in \setZ$, $B \in \setN$,
and put
\begin{equation*}
q = \frac{Ba^*-Ab^*}{Ab-Ba} \,, \qquad
\alpha = \left \lfloor  x_+(q)  \right \rfloor\,.
\end{equation*}
Then $\alpha, Ab-Ba \in \setN$, and $q$ 
is reduced with 
$
q = \xi_+^{-1}(\xi) \in \mathcal{F}^{ext}_{\alpha}\,.
$
\item[(b)]
Let $0 \leq q=\frac{a'}{b'}$ be reduced,
assume that $\alpha = \left \lfloor  x_+(q)  \right \rfloor \geq 1$
and that $q \in \mathcal{F}^{ext}_{\alpha}$. We put
\begin{equation*}
\xi=\frac{a^*b'+aa'}{b^*b'+ba'}\,.
\end{equation*}
Then $\xi$ is a reduced fraction of $\mathcal{F}^{ext}_n$
in the interval $(a/b,a^*/b^*]$ satisfying
$\xi=\xi_+(q)$\,.
\end{itemize}
\end{thm}

The function $\beta(t)$ has jumps of height $-1$ exactly at integer numbers $t \in \setZ$
but is continuous elsewhere.
Let $a/b$ with $a \in \setZ$, $b \in \setN$ be any reduced fraction with denominator $b \leq x$.

By $\begin{displaystyle} u^{\pm}(t)=\lim \limits_{\varepsilon \downarrow 0} 
u(t \pm \varepsilon) \end{displaystyle}$
we denote the one-handed limits of a real- or complex valued function $u$ with respect to the real variable $t$.

Then the height of the jump of $B_x$ at $a/b$ is given by
\begin{equation}\label{Bxjump}
B_x^+(a/b)-B_x^-(a/b)=-\frac{1}{x}\left\lfloor\frac{x}{b}\right\rfloor\,.
\end{equation}
We introduce the function ${\tilde \eta} : \setR \to \setR$ given by ${\tilde \eta}(0)=-\frac12$ and
\begin{equation}\label{etalimit}
{\tilde \eta}(x)=
\frac{(x-\lfloor x \rfloor)(x-\lfloor x \rfloor-1)}{2x}
 \quad \text{for}\ x \in \setR \setminus \{0\}\,.
\end{equation}
The function ${\tilde \eta}$ is continuous apart from the zero-point with derivative 
\begin{equation}\label{etader}
{\tilde \eta}'(x)= \frac12-\frac{\lfloor x \rfloor (\lfloor x \rfloor+1)}{2x^2}\quad \text{for} ~  
x \in \setR \setminus \setZ\,.
\end{equation}

In the following theorem we assume that $\frac{a}{b} < \frac{a^*}{b^*}$
are conse\-cu\-tive reduced fractions in the extended Farey sequence $\mathcal{F}^{ext}_b$ 
of order $b \leq n$ with $b,b^*,n \in \setN$.

\begin{thm}\label{Bx_thm}
\begin{itemize}
\item[(a)]
For $0<x\leq n/b^*$ we have
\begin{equation*}
\begin{split}
&B_{n}\left(\frac{a}{b}+\frac{x}{bn}\right)
	=B_{n}\left(\frac{a}{b}\right)+\frac{1}{2b}+\frac{{\tilde \eta}(x)}{b}
	-\frac{x}{n}\, B_{x}^-\left(\frac{n/x-b^*}{b}\right)\\
&+\frac{x}{2bn} +\frac{1}{n}\,\sum \limits_{k \leq x}\beta\left(\frac{n-kb^*}{b}\right)\,.\\
\end{split}
\end{equation*}
\item[(b)] For $0<t \leq 1$ and $n \in \setN$ we have
\begin{equation*}
B_{n}\left(t\right)
	={\tilde \eta}(tn)
	-\frac{\lfloor tn \rfloor}{n}\, B_{\lfloor tn \rfloor}^-\left(\frac{1}{t}-
	\left \lfloor \frac{1}{t} \right \rfloor\right)
+\frac{tn-\lfloor tn \rfloor}{2n}\,.
\end{equation*}
\end{itemize}
\end{thm}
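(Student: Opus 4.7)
The plan is to prove (a) directly via a lattice-point count, then derive (b) as a specialization with $a=0$, $b=b^*=1$. Starting from the identity
\[
B_n(t) = \frac{(n+1)t}{2} - \frac{1}{n}\sum_{k=1}^{n}\lfloor kt\rfloor - \frac{1}{2},
\]
the difference $B_n(a/b+x/(bn)) - B_n(a/b)$ equals $\frac{x(n+1)}{2bn} - \frac{N}{n}$, where $N$ counts lattice points $(k,m)$ with $1\le k\le n$ and $ka/b<m\le ka/b+kx/(bn)$. Matching the target formula and using $\tilde{\eta}(x)=x/2-L-1/2+L(L+1)/(2x)$ with $L=\lfloor x\rfloor$, the claim reduces to showing
\[
N = \frac{nL}{b} - \frac{nL(L+1)}{2bx} + x\,B_x^-\!\left(\frac{n/x-b^*}{b}\right) - \sum_{k=1}^{L}\beta\!\left(\frac{n-kb^*}{b}\right).
\]

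To evaluate $N$, I would reparametrize by $\ell=mb-ka$: the lattice condition becomes $1\le\ell\le kx/n$ with $\ell\equiv -ka\pmod{b}$, which by the Farey-neighbor identity $a^*b-ab^*=1$ (so $ab^*\equiv-1\pmod{b}$) is equivalent to $k\equiv \ell b^*\pmod{b}$. Swapping order gives $N=\sum_{\ell=1}^{L}N_\ell$ with $N_\ell=\#\{k\in[\lceil \ell n/x\rceil,n]:k\equiv \ell b^*\pmod{b}\}$. Writing $\lfloor y\rfloor=y-\beta(y)-\tfrac{1}{2}$ yields the exact arithmetic-progression count
\[
\#\{k\in[A,B]:k\equiv c\pmod{b}\} = \frac{B-A+1}{b} + \beta\!\left(\frac{A-1-c}{b}\right) - \beta\!\left(\frac{B-c}{b}\right),
\]
and applying this with $c\equiv \ell b^*\pmod{b}$ (using $b$-periodicity of $\beta$ to replace $c$ by $\ell b^*$ inside) produces a closed form for $N$ as $\sum_\ell (n-\lceil \ell n/x\rceil+1)/b$ plus two $\beta$-sums.

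The heart of the proof is a pointwise sawtooth identity. Setting $u_\ell=\ell(n/x-b^*)/b$ and $\theta_\ell=(1+\ell n/x-\lceil \ell n/x\rceil)/b\in(0,1/b]$, one checks $(\lceil \ell n/x\rceil-1-\ell b^*)/b=u_\ell-\theta_\ell$ and $\sum_{\ell=1}^{L}\beta^-(u_\ell)=xB_x^-((n/x-b^*)/b)$. The key claim is
\[
\beta(u_\ell-\theta_\ell) = \beta^-(u_\ell) - \theta_\ell \qquad (\ell=1,\ldots,L),
\]
which by case analysis on whether $u_\ell\in\setZ$ reduces to verifying $\{u_\ell\}\ge\theta_\ell$ whenever $u_\ell\notin\setZ$. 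Here the Farey relation enters decisively: $bu_\ell=\ell n/x-\ell b^*$ implies $\{bu_\ell\}=\{\ell n/x\}$, so $b\{u_\ell\} = \lfloor b\{u_\ell\}\rfloor + \{\ell n/x\}$, giving $\{u_\ell\} = \lfloor b\{u_\ell\}\rfloor/b + \{\ell n/x\}/b \ge \theta_\ell$ (with $\lfloor b\{u_\ell\}\rfloor\ge 1$ forced in the borderline case where $\ell n/x\in\setZ$ but $u_\ell\notin\setZ$). Summing the pointwise identity over $\ell$ and invoking $\sum_\ell \ell n/x = nL(L+1)/(2x)$ collapses everything to the target formula for $N$.

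Part (b) follows by applying (a) with $a=0$, $b=b^*=1$, $a^*=1$ and $x=tn\le n$: each $\beta((n-k)/1)=-1/2$, so the finite sum collapses to $-\lfloor tn\rfloor/(2n)$, which combines with $x/(2bn)=t/2$ and $B_n(0)+1/(2b)=0$ to yield the term $(tn-\lfloor tn\rfloor)/(2n)$; and $(x/n)B_x^-((n/x-1)/1)$ rewrites as $(\lfloor tn\rfloor/n)B_{\lfloor tn\rfloor}^-(1/t-\lfloor 1/t\rfloor)$ via the scaling $B_{tn}(s)=(\lfloor tn\rfloor/(tn))B_{\lfloor tn\rfloor}(s)$ together with the $1$-periodicity $B_{\lfloor tn\rfloor}^-(1/t-1)=B_{\lfloor tn\rfloor}^-(1/t-\lfloor 1/t\rfloor)$. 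The main obstacle is the pointwise sawtooth identity above: ruling out $\{u_\ell\}<\theta_\ell$ uniformly in $\ell$ is precisely what the Farey-neighbor relation $ab^*\equiv-1\pmod{b}$ delivers, and one must handle the integer boundary cases ($\ell n/x\in\setZ$ or $u_\ell\in\setZ$) with care.
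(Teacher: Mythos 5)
Your proposal is correct, but it follows a genuinely different route from the paper's. The paper proves (a) by forming the defect function $R_n(a/b,x)$ (essentially $-b$ times the difference of the two sides), showing that its $x$-derivative vanishes off a discrete set, and then checking that all jumps cancel except those of height $-\tfrac{b}{n}\beta\bigl(\tfrac{n-kb^*}{b}\bigr)$ at integers $k$; the cancellation at non-integer $x$ is not re-derived but imported from the correspondence in \cite[Theorem 2.2]{Ku3} between the jump discontinuities of $B_n$ near $a/b$ and those of $B_x$ at $(n/x-b^*)/b$, and the conclusion is read off from $\lim_{\varepsilon\downarrow 0}R_n(a/b,\varepsilon)=0$. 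You instead reduce (a) to an exact lattice-point count $N$, reparametrize by $\ell=mb-ka$ so that the Farey-neighbour relation $a^*b-ab^*=1$ converts the divisibility condition into $k\equiv \ell b^*\pmod b$, and close the computation with the pointwise identity $\beta(u_\ell-\theta_\ell)=\beta^-(u_\ell)-\theta_\ell$; I checked that your verification of $\{u_\ell\}\ge\theta_\ell$ (including the borderline case $\ell n/x\in\setZ$, $u_\ell\notin\setZ$, where $b\{u_\ell\}$ is a positive integer) is sound, and that the resulting closed form for $N$ matches the target after summing $\sum_\ell \ell n/x = nL(L+1)/(2x)$. The trade-off is clear: your argument is self-contained and makes explicit exactly where the Farey-neighbour arithmetic is used, at the price of delicate boundary-case analysis of the sawtooth at integer arguments; the paper's argument is shorter here but leans on previously established machinery for discontinuities of Farey-related sums. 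Your derivation of (b) agrees with the paper's (specialization $a=0$, $a^*=b^*=b=1$), with the additional explicit scaling step $tB_{tn}^-(s)=\tfrac{\lfloor tn\rfloor}{n}B_{\lfloor tn\rfloor}^-(s)$ and $1$-periodicity, which is correct.
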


\begin{proof} Since (b) follows from (a) in the special case $a=0$, $a^*=b^*=b=1$,
it is sufficient to prove (a). We define for $0 < x \leq n/b^*$:
	\begin{equation}\label{rntilde}
	\begin{split}
	R_n\left(\frac{a}{b},x\right) &=-b\left( 
	B_{n}\left(\frac{a}{b}+\frac{x}{bn}\right)-
	B_{n}\left(\frac{a}{b}\right)\right)\\
	&+\frac12+{\tilde \eta}(x)-\frac{bx}{n}\, B_{x}^-\left(\frac{n/x-b^*}{b}\right)+\frac{x}{2n}\,.\\
		\end{split}
	\end{equation}

	We use \eqref{Bxdef}, \eqref{etader} and obtain, except of the discrete set of jump discontinuities 
	of $R_n$, its derivative
	\begin{equation*}
	\begin{split}
	 \frac{d}{dx} R_n\left(\frac{a}{b},x\right) &=-b\cdot\frac{n+1}{2}\cdot \frac{1}{bn} 
		+\frac{1}{2}-\frac{\lfloor x \rfloor (\lfloor x \rfloor+1)}{2x^2}\\
		&-\frac{b}{n}\,\frac{d}{dx}
		\left(
		 x \, B_{x}^-\left(\frac{n/x-b^*}{b}\right)
		\right)+\frac{1}{2n}\\
		& = -\frac{\lfloor x \rfloor (\lfloor x \rfloor+1)}{2x^2}-\frac{b}{n}\,\frac{d}{dx}
		\sum \limits_{k \leq \lfloor x \rfloor}
		\beta^-\left(k \, \frac{n/x-b^*}{b}\right)\\
		& = -\frac{\lfloor x \rfloor (\lfloor x \rfloor+1)}{2x^2}-\frac{b}{n}
		\sum \limits_{k \leq \lfloor x \rfloor}k
		\cdot \frac{n}{b} \cdot \left(-\frac{1}{x^2}\right)=0\,.\\
		\end{split}
	\end{equation*}
	Note that $B_n=B_n^+$ and $R_n=R^+_{n}$. 
	We deduce from Theorem \ref{farey_thm2} for any $x$ in the interval
	$0 < x \leq n/b^*$  that 
	$\begin{displaystyle} a/b+x/(bn)\end{displaystyle}$
	is a jump discontinuity of $B_{n}$ if and only if
	$\begin{displaystyle}(n/x-b^*)/b\end{displaystyle}$
	is a jump discontinuity of $B_{x}$. 
	Let $x_+(q)$ be defined by the second equation in \eqref{xip} and let 
	$$
	q =\frac{a'}{b'}=\frac{n/x_+(q)-b^*}{b}
	$$
	be any reduced fraction $a'/b' \in  \mathcal{F}^{ext}_{\lfloor x_+(q) \rfloor}$ 
	from Theorem \ref{farey_thm2}(b). We use \eqref{Bxjump} and have
	\begin{equation}\label{phin_sprung}
    -b (B_{n}^{+}-B_{n}^{-})\left(\frac{a}{b}+\frac{x_+(q)}{nb}\right) 
		=	\frac{b}{n} \left\lfloor \frac{n}{b^*b'+ba'} \right\rfloor
		=	\frac{b}{n} \left\lfloor \frac{x_+(q)}{b'} \right\rfloor\,.		
	\end{equation}
	
	First we consider the case that $x_+(q)$ is a {\it non-integer} number. 
	Using again \eqref{Bxjump}
	we obtain 
	\begin{equation}\label{bxsprung}
	\begin{split}
   & \lim \limits_{x \,\downarrow \,x_+(q)}	
     \left(x \,B_{x}^-\left(\frac{n/x-b^*}{b}\right)
		\right)-
		    \lim \limits_{x \,\uparrow \, x_+(q)}	
     \left(x \,B_{x}^-\left(\frac{n/x-b^*}{b}\right)
		\right)\\
		&=x_+(q)\cdot\left(B_{x_+(q)}^- -B_{x_+(q)}^+ \right)\left(\frac{a'}{b'}\right)
		=\left\lfloor \frac{x_+(q)}{b'} \right\rfloor\,,
		\end{split}
  \end{equation}
 taking into account that $(n/x-b^*)/b$ is monotonically decreasing with respect to $x$.
For \eqref{phin_sprung} and \eqref{bxsprung} we note that $a^*b-ab^*=1$ 
for the Farey fractions $a/b < a^*/b^*$ in Theorem \ref{farey_thm2} 
and recall that $B_n$ has a jump at
$$
\frac{a}{b}+\frac{x_+(q)}{nb}=\frac{a(b^*+bq)+a^*b-ab^*}{b(b^*+bq)}=
\frac{a^*b'+aa'}{b^*b'+ba'}=\xi_+(q)
$$
if and only if $B_{x_+(q)}$ has a jump at $q=a'/b'$.
We obtain from \eqref{rntilde}, \eqref{phin_sprung}, \eqref{bxsprung} that
	\begin{equation*}
     \left(R_n^{+}-R_n^{-}\right)\left(\frac{a}{b},x_+(q)\right) 
		=	\frac{b}{n} \left\lfloor \frac{x_+(q)}{b'} \right\rfloor
		-\frac{b}{n} \left\lfloor \frac{x_+(q)}{b'} \right\rfloor=0\,.		
	\end{equation*}
This implies that $R_n$ is free from jumps at non-integer arguments $x$.
 It remains to calculate the jumps of $R_n$ at any {\it integer argument} 
$k$ with $0<k \leq n/b^*$.
 Here we also have to take care of the jump in $B_{x}=B_{k}$
 with respect to the index $x=k$, and conclude
\begin{equation}\label{sprung_fall2}
	\begin{split}
   & \lim \limits_{x \,\downarrow \,k}	
     \left(x \,B_{x}^-\left(\frac{n/x-b^*}{b}\right)
		\right)-
		    \lim \limits_{x \,\uparrow \, k}	
     \left(x \,B_{x}^-\left(\frac{n/x-b^*}{b}\right)
		\right)\\
		&=\lim \limits_{\varepsilon \,\downarrow \,0}	
		\left[
		\sum \limits_{j \leq k}\beta^-\left( j \cdot\frac{\frac{n}{k+\varepsilon}-b^*}{b}\right)
		-
		\sum \limits_{j < k}\beta^-\left(j \cdot \frac{\frac{n}{k-\varepsilon}-b^*}{b}\right)
		\right]\\
		&=\sum \limits_{j \leq k}\beta^- \left( j \cdot \frac{n/k-b^*}{b}\right)
		-\sum \limits_{j \leq k}\beta^+ \left(j \cdot \frac{n/k-b^*}{b}\right)+
		\beta \left(\frac{n-kb^*}{b} \right)\\
		&=\beta \left(\frac{n-kb^*}{b} \right)-
		k\left(B_{k}^{+}-B_{k}^{-} \right)\left(\frac{n/k-b^*}{b} \right)\,.
		\end{split}
  \end{equation}

Using \eqref{rntilde}, \eqref{sprung_fall2} we obtain
\begin{equation*}
	\begin{split}
     \left(R_n^{+}-R_n^{-}\right)\left(\frac{a}{b},k \right) 
		&=-\frac{b}{n}\,\beta \left(\frac{n-kb^*}{b}\right)\\
     &	-b\, ( B_{n}^{+}-
      B_{n}^{-}) \left( \frac{a}{b}+\frac{k}{bn}\right)\\	
      &+\frac{b}{n}k\, ( B_{k}^{+}-
      B_{k}^{-}) \left(\frac{n/k-b^*}{b}\right)\,.\\
  \end{split}
	\end{equation*}
Due to \eqref{phin_sprung} and Theorem \ref{farey_thm2} the second and third terms 
on the right-hand side cancel each other.

We conclude that $R_n$ is a step function with respect to $x$ for a given fraction 
$a/b$ which has jumps of height
\begin{equation*}
R_n^+\left(\frac{a}{b},k \right) -R_n^-\left(\frac{a}{b},k \right) 
=-\frac{b}{n}\,\beta \left(\frac{n-kb^*}{b}\right)
\end{equation*}
only at integer numbers $k$ with $0<k \leq n/b^*$. 
To complete the proof of the theorem we only have to note that
$\begin{displaystyle}
\lim \limits_{\varepsilon \, \downarrow \, 0} R_n(a/b,\varepsilon)=0\,.
\end{displaystyle}$
\end{proof}

Franel \cite{Fra1924} and Landau \cite{Lan1924} made use of the identity
\begin{equation}\label{landau}
\int \limits_0^1\beta(mx)\beta(nx)dx = \frac{(m,n)^2}{12 mn}\,,
\end{equation}
which is valid for all $m,n \in \setN$. A proof of this identity can be found 
in \cite[page 203]{Lan1924} as well as in Edward's textbook
\cite[Section 12.2]{Ed2001}. We need it for the following

\begin{thm}\label{Bx_L2}
For $x \to \infty$ we have with the $L_2(0,1)$-Norm $\|\cdot\|_2$
$$
\|B_{x}\|_2=\mathcal{O}\left(\frac{1}{\sqrt{x}}\right)\,.
$$
On the other hand we have a constant $C>0$ with
$$
\|B_{x}\|_2 \geq\frac{C}{\sqrt{x}} \quad \text{for~}x \geq 1\,.
$$
\end{thm}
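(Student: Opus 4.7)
The plan is to evaluate $\|B_x\|_2$ explicitly via Fourier expansion. The sawtooth function admits the $L^2$-convergent series $\beta(t)=-\frac{1}{\pi}\sum_{m=1}^{\infty}\sin(2\pi mt)/m$, and since $\beta(k\,\cdot)$ has period $1/k$ it is supported, as a Fourier series, only at frequencies divisible by $k$. Regrouping the double sum over $k\le n:=\lfloor x\rfloor$ and $m\ge 1$ according to the total frequency $N=mk$ yields
\begin{equation*}
\sum_{k\le n}\beta(kt)=-\frac{1}{\pi}\sum_{N=1}^{\infty}\frac{\sigma_n(N)}{N}\sin(2\pi Nt),\qquad \sigma_n(N):=\sum_{\substack{d\mid N\\ d\le n}}d,
\end{equation*}
and Parseval then gives the identity
\begin{equation*}
x^2\,\|B_x\|_2^{\,2}=\frac{1}{2\pi^2}\sum_{N=1}^{\infty}\frac{\sigma_n(N)^2}{N^2}.
\end{equation*}

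Next I would reduce the right-hand side to an elementary divisor sum. Expanding the square, interchanging summation, and evaluating the inner series as $\zeta(2)/\operatorname{lcm}(d_1,d_2)^2$, together with $\operatorname{lcm}(d_1,d_2)\cdot\gcd(d_1,d_2)=d_1 d_2$, gives
\begin{equation*}
\sum_{N\ge 1}\frac{\sigma_n(N)^2}{N^2}=\frac{\pi^2}{6}\sum_{d_1,d_2\le n}\frac{\gcd(d_1,d_2)^2}{d_1 d_2}.
\end{equation*}
The lower bound is immediate: restricting to the diagonal $d_1=d_2=d$ leaves exactly $\sum_{d\le n}1=n$, so $x^2\|B_x\|_2^{\,2}\ge n/12$. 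This yields $\|B_x\|_2\ge C/\sqrt{x}$ for $x\ge 2$ (using $n\ge x/2$), and a direct computation $\|B_x\|_2=\|\beta\|_2/x=1/(2\sqrt{3}\,x)$ disposes of $1\le x<2$.

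For the upper bound I would substitute $g=\gcd(d_1,d_2)$, $d_i=g a_i$ with $\gcd(a_1,a_2)=1$, reducing the double sum to $\sum_{g\le n}T(\lfloor n/g\rfloor)$ where $T(M):=\sum_{a_1,a_2\le M,\,\gcd(a_1,a_2)=1}1/(a_1 a_2)$. Möbius inversion on the coprimality condition combined with $\sum_{j\le m}1/j=\mathcal{O}(\log m)$ yields $T(M)=\mathcal{O}((\log M)^2)$, and then the elementary estimate $\sum_{g\le n}(\log(n/g))^2=\mathcal{O}(n)$ produces the required $\mathcal{O}(n)=\mathcal{O}(x)$, finishing the proof. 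The only delicate point is the rearrangement of the double Fourier series into the divisor-coefficient form; once that identity is in place, the whole statement reduces to a bookkeeping exercise in elementary divisor-sum and harmonic-sum estimates, with the diagonal term already forcing the matching lower bound.
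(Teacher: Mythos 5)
Your proposal is correct and, despite the detour through the Fourier coefficients $\sigma_n(N)/N$, is essentially the paper's proof: your Parseval computation is just an indirect derivation of the orthogonality relation $\int_0^1\beta(mt)\beta(nt)\,dt=\gcd(m,n)^2/(12mn)$ that the paper invokes directly, and from the resulting identity $x^2\|B_x\|_2^2=\tfrac{1}{12}\sum_{d_1,d_2\le x}\gcd(d_1,d_2)^2/(d_1d_2)$ onward the two arguments coincide (diagonal terms for the lower bound; the gcd substitution, dropping coprimality, and $\sum_{g\le n}\bigl(\log(n/g)+\mathcal{O}(1)\bigr)^2=\mathcal{O}(n)$ for the upper bound). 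No gaps.
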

\begin{proof}
We obtain from \eqref{landau}
\begin{equation*}
\begin{split}
\|B_{x}\|_2^2 &=\frac{1}{12x^2}\sum \limits_{m,n \leq x}\frac{(m,n)^2}{mn}
=\frac{1}{12x^2}\sum \limits_{d \leq x}\sum \limits_{\underset{(m,n)=d}{m,n\leq x\,:}}\frac{d^2}{mn}\\
&=\frac{1}{12x^2}\sum \limits_{d \leq x}\sum \limits_{\underset{(j,k)=1}{j,k\leq x/d\,:}}\frac{1}{jk}
\leq \frac{1}{12x^2}\sum \limits_{d \leq x}\sum \limits_{j,k\leq x/d}\frac{1}{jk}\\
&\leq \frac{1}{12x^2}\sum \limits_{d \leq x}\left(\log(x/d)+2\right)^2
=\mathcal{O}\left(\frac{1}{x}\right) \quad\text{for}~x \to \infty\\
\end{split}
\end{equation*}
with Euler's summation formula, regarding that
$$
\int \limits_{1}^{x}\left(\log(x/t)+2 \right)^2\,dt=10(x-1)-6\log(x)-\log(x)^2=\mathcal{O}\left(x \right)\,,
\quad x \geq 1\,.
$$
To complete the proof we note that
\begin{equation*}
\|B_{x}\|_2^2 =\frac{1}{12x^2}\sum \limits_{d \leq x}
\sum \limits_{\underset{(j,k)=1}{j,k\leq x/d\,:}}\frac{1}{jk}\geq 
\frac{1}{12x^2}\sum \limits_{d \leq x}1=\frac{\lfloor x \rfloor}{12x^2}\,.
\end{equation*}
\end{proof}

The next two theorems employ the elementary theory of continued fractions.
We will use them to derive estimates for $B_n(t)$ 
with $t$ in certain subsets $\mathcal{M}_n , \tilde{\mathcal{M}}_n \subset (0,1)$ and
$\lim \limits_{n \to \infty} |\mathcal{M}_n|=
\lim \limits_{n \to \infty}|\tilde{\mathcal{M}}_n| =1$.

First we recall some basic facts and notations about continued fractions.
For $\lambda_0 \in \setR$ and $\lambda_1,\lambda_2,\ldots,\lambda_m>0$
the finite continued fraction $\langle \lambda_0,\lambda_1,\ldots,\lambda_m \rangle$
is defined recursively by $\langle \lambda_0\rangle=\lambda_0$,
$\langle \lambda_0,\lambda_1\rangle=\lambda_0+1/\lambda_1$ and
\begin{equation*}
\langle \lambda_0,\lambda_1,\ldots,\lambda_m \rangle=
\langle \lambda_0,\ldots,\lambda_{m-2},\lambda_{m-1}+1/\lambda_m \rangle\,,
\quad m\geq 2\,.
\end{equation*}
Moreover, if $\lambda_j\geq 1$ is given for all $j \in \setN$, then the limit
\begin{equation*}
\lim \limits_{m \to \infty}\langle \lambda_0,\lambda_1,\ldots,\lambda_m \rangle=
\langle \lambda_0,\lambda_1,\lambda_2\ldots\rangle
\end{equation*}
exists and defines an infinite continued fraction. Especially for integer numbers
$\lambda_0 \in \setZ$ and  $\lambda_1,\lambda_2,\ldots \in \setN$ we obtain a unique
representation 
\begin{equation*}
t=\langle \lambda_0,\lambda_1,\lambda_2\ldots\rangle
\end{equation*}
for all $t \in \setR \setminus \setQ$ in terms of an infinite continued fraction.
For the determination of the coefficients $\lambda_j$ we need the following

\begin{defin}\label{vtdef}
For given  $t \in \setR \setminus \setQ$ we define a sequence of irrational numbers by 
\begin{equation*}
\vartheta_0=t\,, \quad \vartheta_j=\frac{1}{\vartheta_{j-1}-\lfloor \vartheta_{j-1}\rfloor}>1\,, \quad j \in \setN\,.
\end{equation*}
We may also write $\vartheta_j =\vartheta_j(t)$ in order to
indicate that the quantities $\vartheta_j$ depend on the fixed number $t$.
\end{defin}
We have
\begin{equation}\label{kette2}
\lambda_0= \lfloor t \rfloor\,,~\lambda_j = \lfloor \vartheta_j(t) \rfloor \quad \text{and~} 
t=\langle \lambda_0,\ldots,\lambda_{j-1},\vartheta_j(t)\rangle \quad  \text{for ~ all ~} j \in \setN\,.
\end{equation}

The following theorem is due to A. Ostrowski. 
It allows a very efficient calculation of the values $B_n(t)$
in terms of the continued fraction expansion of $t$. 

\begin{thm}\label{ostrowski}Ostrowski \cite[Equation (2), p. 80]{Ost1922}\\
Put $S(n,t)=\sum \limits_{k \leq n}\beta(kt)=nB_n(t)$ for $n \in \setN_0$ and $t \in \setR$.
Given are the continued fraction expansion $t=\langle\lambda_0,\lambda_1,\lambda_2,\ldots\rangle$
of any fixed $t \in \setR \setminus \setQ$ and $n \in \setN$. Then there is exactly
one index $j_* \in \setN$ with $b_{j_*} \leq n < b_{j_*+1}$, where
$\begin{displaystyle}
a_k/b_k = \langle \lambda_0,\ldots,\lambda_{k-1} \rangle
\end{displaystyle}$
are reduced fractions $a_k/b_k$ and $k,b_k \in \setN$. Put
$$
n'=n-b_{j_*}\left \lfloor \frac{n}{b_{j_*}}\right \rfloor\,.
$$
Then we have
\begin{equation}\label{Sost1}
 S(n,t)=S(n',t)+\frac{(-1)^{j_*}}{2}\left \lfloor \frac{n}{b_{j_*}}\right \rfloor
\left(1-\rho_{j_*} (n+n'+1) \right)
\end{equation}
with $\rho_{j_*}  = |b_{j_*} t-a_{j_*} |$ and
\begin{equation}\label{Sost2}
\left \lfloor \frac{n}{b_{j_*}}\right \rfloor \leq \lambda_{j_*}\,,\quad
0 < \left|1-\rho_{j_*}(n+n'+1) \right| <1\,.
\end{equation}
\end{thm}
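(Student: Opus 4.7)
The plan is to evaluate $S(n,t) - S(n',t) = \sum_{k=n'+1}^{n}\beta(kt)$ directly, exploiting two structural facts: the range $(n', n]$ contains exactly $m\, b_{j_*}$ integers, decomposing into $m$ complete residue cycles modulo $b_{j_*}$, and $b_{j_*}t$ is extremely close to the integer $a_{j_*}$. First I would write $t = a_{j_*}/b_{j_*} + \epsilon/b_{j_*}$ with $\epsilon = b_{j_*}t - a_{j_*}$, so that $|\epsilon| = \rho_{j_*}$ and, by the alternation of the convergents around $t$, $\sgn(\epsilon) = (-1)^{j_*-1}$. The classical bound $\rho_{j_*} < 1/b_{j_*+1}$ (strict for irrational $t$) then supplies two size controls used throughout: for every $k \leq n < b_{j_*+1}$, $|k\epsilon/b_{j_*}| < 1/b_{j_*}$; and for every $j \leq m$, $|j\epsilon| \leq \lambda_{j_*}\rho_{j_*} < 1$, where $m \leq \lambda_{j_*}$ follows from the recurrence $b_{j_*+1} = \lambda_{j_*}b_{j_*} + b_{j_*-1}$.

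Next I split $\sum_{k=n'+1}^{n}\beta(kt)$ according to whether $k$ is a multiple of $b_{j_*}$. For a non-multiple, $\{ka_{j_*}/b_{j_*}\}$ sits at distance at least $1/b_{j_*}$ from $\setZ$, and since $|k\epsilon/b_{j_*}| < 1/b_{j_*}$ the perturbation never carries $ka_{j_*}/b_{j_*}$ past an integer (a case analysis on $\sgn(\epsilon)$ gives the strictness); hence $\beta(kt) = \beta(ka_{j_*}/b_{j_*}) + k\epsilon/b_{j_*}$ exactly. Over each of the $m$ residue cycles the $\beta$-part sums to $\sum_{r=1}^{b_{j_*}-1}(r/b_{j_*}-\tfrac12) = 0$, leaving only $(\epsilon/b_{j_*})\sum_{\text{non-mult}}k = (m\epsilon/2)(n+n'-m)$. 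For the $m$ multiples $k = jb_{j_*}$ with $1\leq j\leq m$, one has $kt = ja_{j_*} + j\epsilon$ with $|j\epsilon|<1$, so $\beta(kt) = j\epsilon \mp \tfrac12$ according to $\sgn(\epsilon)$, summing to $\epsilon\, m(m+1)/2 \mp m/2$.

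Adding the two contributions, the $n'$-dependence collapses via $(n+n'-m)+(m+1)=n+n'+1$, and the two sign cases merge into the single clean identity
\[
\sum_{k=n'+1}^{n}\beta(kt) \;=\; \frac{(-1)^{j_*}}{2}\,m\,\bigl(1 - \rho_{j_*}(n+n'+1)\bigr),
\]
which is \eqref{Sost1}. The bounds \eqref{Sost2} then follow painlessly: $m \leq \lambda_{j_*}$ is already in hand; $\rho_{j_*}(n+n'+1) < 2$ follows from $n+n' \leq 2b_{j_*+1}-2$ combined with $\rho_{j_*} < 1/b_{j_*+1}$; and $|1 - \rho_{j_*}(n+n'+1)| > 0$ because $\rho_{j_*}$ is irrational while $n+n'+1 \in \setN$.

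The principal obstacle is the sign bookkeeping in the second paragraph: one must verify simultaneously in both cases $\epsilon \gtrless 0$ that no non-multiple produces a jump contribution, and then check that the constant $\mp m/2$ from the multiples combines with the linear sums to collapse onto exactly $(-1)^{j_*}(1 - \rho_{j_*}(n+n'+1))/2$. That the two sign cases fuse into a single formula is where the continued-fraction structure becomes essential and constitutes the heart of the argument.
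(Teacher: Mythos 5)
The paper offers no proof of this statement; it is quoted verbatim from Ostrowski's 1922 paper as a known result, so there is nothing internal to compare against. Your reconstruction is, however, correct and complete in outline. I checked the key points: with the paper's indexing ($a_k/b_k=\langle\lambda_0,\ldots,\lambda_{k-1}\rangle$, $b_0=0$, $b_1=1$) one indeed has $\sgn(b_{j_*}t-a_{j_*})=(-1)^{j_*-1}$ and $\rho_{j_*}=1/(b_{j_*}\vartheta_{j_*}+b_{j_*-1})<1/b_{j_*+1}$, which gives both size controls you use; the interval $(n',n]$ consists of exactly $m=\lfloor n/b_{j_*}\rfloor$ complete residue blocks whose multiples of $b_{j_*}$ are precisely $b_{j_*},\ldots,mb_{j_*}$; the non-multiple contribution is $\frac{m\epsilon}{2}(n+n'-m)$ after the zero-sum $\sum_{r=1}^{b_{j_*}-1}(r/b_{j_*}-\frac12)=0$; and adding the multiple contribution $\frac{\epsilon m(m+1)}{2}-\frac{\sgn(\epsilon)m}{2}$ collapses to $\frac{m\sgn(\epsilon)}{2}\bigl(\rho_{j_*}(n+n'+1)-1\bigr)=\frac{(-1)^{j_*}m}{2}\bigl(1-\rho_{j_*}(n+n'+1)\bigr)$, which is \eqref{Sost1}. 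The bounds \eqref{Sost2} follow as you say, noting that $b_{j_*}<b_{j_*+1}$ is forced by $b_{j_*}\le n<b_{j_*+1}$ so that $\rho_{j_*}(n+n'+1)<1+b_{j_*}/b_{j_*+1}<2$ is strict, and $m\le\lambda_{j_*}$ needs the separate (trivial) check for $j_*=1$ where $b_{j_*-1}=0$. The one step worth writing out in a full version is the no-crossing claim for non-multiples: since the residue $r=ka_{j_*}\bmod b_{j_*}$ satisfies $1\le r\le b_{j_*}-1$ and $|k\epsilon/b_{j_*}|<1/b_{j_*}$, the fractional part $r/b_{j_*}+k\epsilon/b_{j_*}$ stays strictly inside $(0,1)$ in both sign cases, which is exactly the case analysis you flag as the principal obstacle; it does go through.
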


Following Ostrowski's strategy we note two important conclusions.
We fix any number $t = \langle \lambda_0,\lambda_1,\lambda_2,\ldots \rangle \in \setR \setminus \setQ$ 
and apply Ostrowski's Theorem \ref{ostrowski} successively, starting with the calculation
of $S(n,t)$ and $\begin{displaystyle} |S(n,t)|\leq |S(n',t)| +\lambda_{j_*}/2\end{displaystyle}$\,.
If $n'=0$, then $S(n',t)=0$, and we are done. Otherwise we replace $n$ by the reduced number $n'$
with $0<n'<b_{j_*}$ and apply Ostrowski's Theorem again, and so on. For the final calculation
of $S(n,t)$ we need at most ${j_*}$ applications of the recursion formula and conclude from 
\eqref{Sost1}, \eqref{Sost2} that
\begin{equation}\label{Snfinal}
 n|B_n(t)| = |S(n,t)| \leq \frac12 \, \sum \limits_{k=1}^{j_*} \lambda_k \,.
\end{equation}
From $b_0=0$, $b_1=1$ and $b_{j+1}=b_{j-1}+\lambda_j b_j$ for $j \in \setN$
we obtain $b_{j+1}\geq 2b_{j-1}$, and hence for all $j \geq 3$ that
$\begin{displaystyle}
b_j \geq 2^{\frac{j-1}{2}}\,.
\end{displaystyle}$
Since $n \geq b_{j_*}$, we obtain without restrictions on $j_*$ for $n \geq 3$ that
$\begin{displaystyle}
n \geq 2^{\frac{j_*-1}{2}}
\end{displaystyle}$ and 
\begin{equation}\label{jstarabschaetz}
\begin{split}
j_* \leq 1+\frac{2}{\log 2 }\log n  \leq \left(1 + \frac{2}{2/3}\right)\log n = 4 \log n 
\,, \quad n \geq 3\,.
\end{split}
\end{equation}

We will see that \eqref{Snfinal} and \eqref{jstarabschaetz} have important conclusions.
An immediate consequence is Ostrowski's estimate \eqref{ostopt}
for irrational numbers $t$ with bounded partial quotients,
but first shed new light on these estimates by using Theorem \ref{Bx_thm}(b)
instead of Theorem \ref{ostrowski}. We put $J_*=(0,1)\setminus \setQ$ and fix 
any $t \in J_*$ and $n \in \setN$. The sequence
\begin{equation}\label{tsequence}
t_0 =t\,, \quad t_{j}=\frac{1}{t_{j-1}} - \left \lfloor\frac{1}{t_{j-1}} \right \rfloor
\end{equation}
with $j \in \setN$ is infinite, whereas the corresponding sequence of non-negative integer numbers
\begin{equation*}
n_0 =n\,, \quad n_{j} = \lfloor t_{j-1} n_{j-1} \rfloor
\end{equation*}
is strictly decreasing and terminates if $n_{j}=0$. 
Therefore $n_{j'}=0$ for some index $j'\in \setN$.
We assume $1 \leq j<j'$ and distinguish the two cases $0<t_{j-1}<1/2$ and $1/2<t_{j-1}<1$.
In the first case we have $n_{j+1}<n_{j} < n_{j-1}/2$,
and in the second case again
$$n_{j+1}=
\lfloor t_{j}\lfloor t_{j-1} n_{j-1}\rfloor\rfloor
< t_{j} t_{j-1} n_{j-1} = (1-t_{j-1})n_{j-1} < n_{j-1}/2\,.
$$
If $j'$ is odd, then
$$
n=n_0 \geq 2^{\frac{j'-1}{2}}n_{j'-1} \geq 2^{\frac{j'-1}{2}}\,,
$$
otherwise
$$
n=n_0 \geq n_1 \geq 2^{\frac{j'-2}{2}}n_{j'-1} \geq 2^{\frac{j'-2}{2}}\,,
$$
and $n \geq 2^{\frac{j'-2}{2}}$ in both cases. Therefore
\begin{equation}\label{jstrich}
j' \leq 2+ \frac{2}{\log 2} \log n \leq \left(1+ \frac{2}{\log 2}\right) \log n
\leq 4 \log n\,,\quad n \geq 8\,.
\end{equation}
Estimate \eqref{jstrich} bears a strong resemblance with \eqref{jstarabschaetz}. 
Now it follows from Theorem \ref{Bx_thm}(b) that
\begin{equation}\label{Bsequence}
B_n(t) = \sum \limits_{j=0}^{j'-1}(-1)^j
\left(
\frac{n_j}{n} {\tilde \eta}(t_j n_j)
+ \frac{t_j n_j-\lfloor t_j n_j \rfloor}{2n}
\right)\,.
\end{equation}
For the sequence in \eqref{tsequence} we have $\vartheta_{j+1}t_{j}=1$ for all $j \in \setN_0$,
and we obtain from the definition \eqref{etalimit} of ${\tilde \eta}$ that
\begin{equation*}
\frac{n_j}{n} {\tilde \eta}(t_j n_j)+ \frac{t_j n_j-\lfloor t_j n_j \rfloor}{2n}
=-\frac{t_j n_j-\lfloor t_j n_j \rfloor}{2n}
\left(
\vartheta_{j+1}\left(1-\left(t_j n_j-\lfloor t_j n_j \rfloor\right)\right)-1
\right)\,.
\end{equation*}
Here $\vartheta_{j+1}>1$ implies $|\vartheta_{j+1}\left(1-\left(t_j n_j-\lfloor t_j n_j \rfloor\right)\right)-1| \leq \max(1,\vartheta_{j+1}-1)$.
We see from \eqref{Bsequence} with Definition \ref{vtdef} and \eqref{kette2} that
\begin{equation}\label{Bestimate_modified}
\left | B_n(t)\right | \leq \sum \limits_{j=0}^{j'-1}
\frac{\max(1,\vartheta_{j+1}-1)}{2n}
\leq \frac{1}{2n}\sum \limits_{k=1}^{j'}\lambda_k\,.
\end{equation}
The calculations of $B_n(t)$ with Ostrowski's Theorem \ref{ostrowski} on one hand 
and with \eqref{Bsequence} on the other hand are similar but different.
Especially $j_*$ in Theorem \ref{ostrowski} and $j'$ used in \eqref{Bsequence}
are different in general. If we use \eqref{jstarabschaetz}
and \eqref{jstrich} then estimates \eqref{Bestimate_modified} and
\eqref{Snfinal} both give the same result. 
Hence Theorem \ref{Bx_thm}(b) may be used as well instead of Ostrowski's Theorem
for an efficient calculation and estimation of $B_n(t)$ and $S(n,t)$.
This is a surprising analogy.

\begin{thm}\label{kettenabschaetz}
Given are integer numbers $\alpha_1,\ldots,\alpha_m \in \setN$.
We put $J_*=(0,1)\setminus \setQ$.
Using Definition \ref{vtdef} 
with the functions $\vartheta_j$ depending on $t \in J_*$
we obtain for the measure $| \mathcal{M}|$ of the set
$$
\mathcal{M}=\left\{t \in J_*\,:\,\vartheta_j(t) < \alpha_j \quad\text{for~all~} j=1,\ldots,m\right\}
$$
the estimates
\begin{equation*}
\prod \limits_{j=1}^m \left(1-\frac{1}{\alpha_j} \right)^2
\leq | \mathcal{M}| \leq \prod \limits_{j=1}^m \left(1-\frac{1}{\alpha_j} \right)\,.
\end{equation*}
\end{thm}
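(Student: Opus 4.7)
The plan is to decompose $\mathcal{M}$ into a disjoint union of continued-fraction cylinder sets and set up an induction on $m$ in which the inductive factor is uniformly sandwiched between $(1-1/\alpha_m)^2$ and $1-1/\alpha_m$.

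Since $\vartheta_j>1$ for $j\ge 1$, the condition $\vartheta_j<\alpha_j$ is equivalent to $\lambda_j:=\lfloor\vartheta_j\rfloor\in\{1,\ldots,\alpha_j-1\}$, so $\mathcal{M}$ is the disjoint union of the cylinder sets
\[
C(\lambda_1,\ldots,\lambda_m)\ =\ \{\,t\in J_*\ :\ \lambda_j(t)=\lambda_j,\ j=1,\ldots,m\,\}
\]
over all admissible tuples. By standard continued-fraction theory, $C(\lambda_1,\ldots,\lambda_m)$ is an open subinterval of $(0,1)$ of length $1/(q_m(q_m+q_{m-1}))$, where $p_k/q_k=\langle 0,\lambda_1,\ldots,\lambda_k\rangle$ are the convergents satisfying $q_k=\lambda_k q_{k-1}+q_{k-2}$ with $q_0=1$, $q_{-1}=0$.

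The main step is the innermost summation. Writing $q:=q_{m-1}$, $r:=q_{m-2}$, partial fractions and telescoping yield
\[
\sum_{\lambda_m=1}^{\alpha_m-1}\frac{1}{(\lambda_m q+r)((\lambda_m+1)q+r)}\ =\ \frac{1}{q}\left(\frac{1}{q+r}-\frac{1}{\alpha_m q+r}\right),
\]
which equals the parent cylinder length $|C(\lambda_1,\ldots,\lambda_{m-1})|=1/(q(q+r))$ multiplied by the ratio
\[
R(\lambda_1,\ldots,\lambda_{m-1})\ =\ \frac{(\alpha_m-1)q_{m-1}}{\alpha_m q_{m-1}+q_{m-2}}.
\]
The decisive point is that $R$ is sandwiched uniformly in the remaining indices: $q_{m-2}\ge 0$ yields $R\le 1-1/\alpha_m$, while the monotonicity $q_{m-2}\le q_{m-1}$ (immediate from the recursion) yields $R\ge(\alpha_m-1)/(\alpha_m+1)\ge(1-1/\alpha_m)^2$, the last inequality reducing to $\alpha_m^2\ge\alpha_m^2-1$.

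Summing over the remaining outer indices therefore gives
\[
(1-1/\alpha_m)^2\,|\mathcal{M}_{m-1}|\ \le\ |\mathcal{M}_m|\ \le\ (1-1/\alpha_m)\,|\mathcal{M}_{m-1}|,
\]
and iterating from the base case $m=1$, in which $\mathcal{M}=(1/\alpha_1,1)\cap J_*$ has measure $1-1/\alpha_1$, produces both claimed bounds. The only slightly nontrivial piece is the telescoping identity that isolates a ratio $R$ independent of $\lambda_1,\ldots,\lambda_{m-1}$; once that uniformity is in place, the two elementary estimates on $R$ and the induction on $m$ are routine.
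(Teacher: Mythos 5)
Your proposal is correct and follows essentially the same route as the paper: decompose $\mathcal{M}$ into continued-fraction cylinder intervals, telescope the sum over the innermost partial quotient, and sandwich the resulting ratio (your bound $q_{m-2}\le q_{m-1}$ is exactly the paper's use of $\lambda_m\ge 1$), then induct on $m$. No gaps.
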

\begin{proof}
The desired result is valid for $m=1$ with 
$\begin{displaystyle}
\mathcal{M}=\left\{t \in J_*\,:\,1/t < \alpha_1\right\}
\end{displaystyle}$
and 
$\begin{displaystyle}
|\mathcal{M}|=1-1/\alpha_1\,.
\end{displaystyle}$
Assume that the statement of the theorem is already true for a given $m \in \setN$.
We prescribe $\alpha_{m+1} \in \setN$ and will 
use induction to prove the statement for $m+1$. 

For all $j \in \setN$ and general given numbers $\lambda_0 \in \setR$ 
and $\lambda_1,\ldots,\lambda_{j-1}>0$
we put for $1\leq k<j$:
\begin{equation}\label{kette3}
\begin{split}
 a_0=1\,,~  a_1=\lambda_0\,,~ & a_{k+1}=a_{k-1}+\lambda_k a_k\,,\\
 b_0=0\,,~  b_1=1\,,~ & b_{k+1}=b_{k-1}+\lambda_k b_k\,.\\
\end{split}
\end{equation}
We have 
\begin{equation}\label{kette4}
\begin{split}
& \langle \lambda_0,\lambda_1, \ldots \lambda_{j-1},x \rangle -
\langle \lambda_0,\lambda_1, \ldots \lambda_{j-1},x' \rangle\\
&=
\frac{(-1)^j (x-x')}{(b_jx+b_{j-1})(b_jx'+b_{j-1})}\quad \text{for ~}x,x'>0\,.
\end{split}
\end{equation}
Especially for $\lambda_0=0$ and {\it integer numbers} $\lambda_1,\ldots,\lambda_j \in \setN$
we define the set $J(\lambda_1,\ldots,\lambda_j)$
consisting of all $t \in J_*$ between the two rational numbers
$\langle 0,\lambda_1, \ldots ,\lambda_{j-1},\lambda_{j}\rangle$ and
$\langle 0,\lambda_1, \ldots ,\lambda_{j-1},\lambda_{j}+1 \rangle$.

It follows from \eqref{kette3},\eqref{kette4} and all $j \in \setN$  that
\begin{equation}\label{kette5}
|J(\lambda_1,\ldots,\lambda_j)|=
\frac{1}{(b_j(\lambda_j+1)+b_{j-1})(b_j \lambda_j +b_{j-1})}\,.
\end{equation}
The sets $J(k)=(1/(k+1),1/k)\setminus \setQ$ with $k \in \setN$ 
form a partition of $J_*=(0,1)\setminus \setQ$. 
More general, it follows from Definition \ref{vtdef} and \eqref{kette2} 
for fixed numbers $\lambda_1,\ldots,\lambda_j \in \setN$ that
the pairwise disjoint sets $J(\lambda_1,\ldots,\lambda_j,k)$ with $k \in \setN$
form a partition of the set $J(\lambda_1,\ldots,\lambda_j)$.
We conclude by induction with respect to $j$ that the pairwise disjoint sets 
$J(\lambda_1,\ldots,\lambda_j)$ with $(\lambda_1,\ldots,\lambda_j) \in \setN^j$
also form a partition of $J_*$.

Now we put $j=m$ and distinguish two cases, $m$ odd and $m$ even, respectively.
In both cases, $m$ odd or $m$ even, the union
\begin{equation*}
\bigcup \limits_{k=1}^{\alpha_{m+1}-1}J(\lambda_1,\ldots,\lambda_m,k)
\end{equation*}
is the set of all numbers $t \in J_*$ with $\lfloor \vartheta_j(t) \rfloor = \lambda_j$
for $j=1,\ldots,m$ such that $\vartheta_{m+1}(t) < \alpha_{m+1}$. 
We define the set
\begin{equation*}
\mathcal{M}'=\left\{t \in J_*\,:\,\vartheta_j(t) < \alpha_j \quad\text{for~all~} j=1,\ldots,m+1\right\}
\end{equation*}
and conclude
\begin{equation}\label{mstrich}
|\mathcal{M}'| = \sum \limits_{\lambda_1=1}^{\alpha_{1}-1}
\sum \limits_{\lambda_2=1}^{\alpha_{2}-1}
\cdots
\sum \limits_{\lambda_m=1}^{\alpha_{m}-1}
\sum \limits_{k=1}^{\alpha_{m+1}-1}
|J(\lambda_1,\ldots,\lambda_m,k)|\,.
\end{equation}
It also follows from our induction hypothesis that
\begin{equation}\label{hypo}
\begin{split}
&\prod \limits_{j=1}^m \left(1-\frac{1}{\alpha_j} \right)^2\\
\leq&\sum \limits_{\lambda_1=1}^{\alpha_{1}-1}
\sum \limits_{\lambda_2=1}^{\alpha_{2}-1}
\cdots
\sum \limits_{\lambda_m=1}^{\alpha_{m}-1}
|J(\lambda_1,\ldots,\lambda_m)| \leq \prod \limits_{j=1}^m \left(1-\frac{1}{\alpha_j} \right)\,.\\
\end{split}
\end{equation}
We evaluate the inner sum in \eqref{mstrich}, and obtain for odd values of $m$
the telescopic sum
\begin{equation*}
\begin{split}
&\sum \limits_{k=1}^{\alpha_{m+1}-1}
|J(\lambda_1,\ldots,\lambda_m,k)|\\
&=
\sum \limits_{k=1}^{\alpha_{m+1}-1}
\left(
\langle 0,\lambda_1,\ldots,\lambda_m,k+1\rangle -\langle 0,\lambda_1,\ldots,\lambda_m,k\rangle
\right)\\
&=\langle 0,\lambda_1,\ldots,\lambda_m,\alpha_{m+1}\rangle -\langle 0,\lambda_1,\ldots,\lambda_m,1\rangle\\
&=\langle 0,\lambda_1,\ldots,\lambda_{m-1},\lambda_m+1/\alpha_{m+1}\rangle -
\langle 0,\lambda_1,\ldots,\lambda_{m-1},\lambda_m+1\rangle\,.\\
\end{split}
\end{equation*}
Apart from a minus sign on the right hand side we get the same result for even values of $m$,
and hence from \eqref{kette4} with $j=m$ in both cases 
\begin{equation}\label{tele}
\begin{split}
&\sum \limits_{k=1}^{\alpha_{m+1}-1}
|J(\lambda_1,\ldots,\lambda_m,k)|\\
&=
\frac{1-\frac{1}{\alpha_{m+1}}}{(b_m(\lambda_m+1)+b_{m-1})
(b_m(\lambda_m+\frac{1}{\alpha_{m+1}})+b_{m-1})}\,.\\
\end{split}
\end{equation}
Using $\lambda_m\geq 1$ we have
\begin{equation*}
\frac{\left(1-\frac{1}{\alpha_{m+1}}\right)^2}{b_m \lambda_m+b_{m-1}}
\leq
\frac{1-\frac{1}{\alpha_{m+1}}}{
b_m(\lambda_m+\frac{1}{\alpha_{m+1}})+b_{m-1}}
\leq
\frac{1-\frac{1}{\alpha_{m+1}}}{b_m \lambda_m+b_{m-1}}\,,
\end{equation*}
and obtain from \eqref{tele} and \eqref{kette5} with $j=m$ that
\begin{equation}\label{innerschaetz}
\begin{split}
&|J(\lambda_1,\ldots,\lambda_m)| \left(1-\frac{1}{\alpha_{m+1}}\right)^2\\
&\leq \sum \limits_{k=1}^{\alpha_{m+1}-1}|J(\lambda_1,\ldots,\lambda_m,k)|
\leq |J(\lambda_1,\ldots,\lambda_m)| \left(1-\frac{1}{\alpha_{m+1}}\right)\,.\\
\end{split}
\end{equation}
The theorem follows from \eqref{mstrich}, \eqref{hypo} and \eqref{innerschaetz}.
\end{proof}
\begin{rem}
Since $\alpha_j \in \setN$ for $j \leq m$, the conditions $\vartheta_j(t) < \alpha_j$
in the definition of the set $\mathcal{M}$ may likewise be replaced
by the equivalent conditions $\lambda_j \leq \alpha_j-1$, 
where $\lambda_j$ are the coefficients in the continued fraction expansion of $t$,
see Definition \ref{vtdef} and \eqref{kette2}\,.
\end{rem}


\section{Dirichlet series related to Farey sequences} \label{dirichlet_section}
We define the sawtooth function $\beta_0:\setR \to \setR$ by
\begin{equation*}
\beta_{0}(x)=\begin{cases}
x-\lfloor x \rfloor-\frac12
 &\ \text{for}\ x \in \setR \setminus{\setZ}\, ,\\
0 &\ \text{for}\ x \in \setZ\,.\\
\end{cases}
\end{equation*}

With $x>0$ the 1-periodic
function $B_{x,0}: \setR \to \setR$ is 
the arithmetic mean of $B_{x}^-$, $B_{x}^+=B_x$, see \eqref{Bxdef}, hence
\begin{equation}\label{Bx0def}
B_{x,0}\left(t \right)= \frac{1}{x}\sum \limits_{k \leq x} \beta_{0}(kt)
=\frac12\left(B_{x}^-(t) + B_{x}^+(t) \right)\,.
\end{equation}

\begin{lem}\label{B0_rational}
For (relatively prime) numbers $a \in \setZ$ and $b \in \setN$ we have 
$$
|B_{x,0}(a/b)| \leq \frac{b}{x}
$$
for all $x>0$.
\end{lem}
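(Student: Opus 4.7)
The plan is to exploit the fact that for a fixed reduced fraction $a/b$, the sequence $k \mapsto \beta_0(ka/b)$ is periodic in $k$ with period $b$ and sums to zero over any full period. This will reduce the problem to bounding an incomplete tail of fewer than $b$ terms, each trivially at most $1/2$ in absolute value.

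First I would verify the full-period cancellation. Since $\gcd(a,b)=1$, the residues $ka \bmod b$ run through the complete system $\{0,1,\ldots,b-1\}$ as $k$ varies over any $b$ consecutive integers. Using $\beta_0(0)=0$ and $\beta_0(j/b)=j/b-\frac12$ for $j=1,\ldots,b-1$, this gives
$$
\sum_{k=1}^{b}\beta_0\!\left(\frac{ka}{b}\right)
=\sum_{j=0}^{b-1}\beta_0\!\left(\frac{j}{b}\right)
=\sum_{j=1}^{b-1}\left(\frac{j}{b}-\frac12\right)=0,
$$
and by $1$-periodicity of $\beta_0$ the same holds for any block of $b$ consecutive indices.

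Next I would write $\lfloor x\rfloor = qb + r$ with $q\geq 0$ and $0\leq r<b$. Splitting the sum $\sum_{k\leq x}\beta_0(ka/b)$ into $q$ full blocks of length $b$ (each contributing $0$) plus a remainder of at most $r<b$ terms, each of which is bounded in absolute value by $\tfrac12$, one obtains
$$
\left|\sum_{k\leq x}\beta_0\!\left(\frac{ka}{b}\right)\right|\leq \frac{r}{2}<\frac{b}{2}\leq b,
$$
and dividing by $x$ yields $|B_{x,0}(a/b)|\leq b/x$ as claimed. The case $0<x<1$ is trivial since the sum is then empty, and the argument is indifferent to the sign of $a$ because $\beta_0$ is odd.

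There is no real obstacle: the whole proof hinges on the elementary observation that $\beta_0$ has mean zero on $\{0, 1/b,\ldots,(b-1)/b\}$, and on the fact that $\gcd(a,b)=1$ makes multiplication by $a$ permute these points. The only subtle point worth mentioning is the modified definition $\beta_0(n)=0$ for $n\in\setZ$ (as opposed to $\beta(n)=-\tfrac12$), which is precisely what ensures the full-period sum vanishes exactly rather than only to within $O(1)$.
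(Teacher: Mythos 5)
Your proof is correct, and it takes a genuinely different and more self-contained route than the paper. You exploit the exact full-period cancellation: since $\gcd(a,b)=1$, the map $k\mapsto ka\bmod b$ permutes the residues, and the symmetrized sawtooth $\beta_0$ has mean zero on $\{0,1/b,\dots,(b-1)/b\}$ (precisely because $\beta_0$ vanishes at integers), so only an incomplete block of at most $b-1$ terms survives, each bounded by $\tfrac12$. This yields the sharper bound $|B_{x,0}(a/b)|\leq (b-1)/(2x)$ and requires nothing beyond periodicity. The paper instead works with the one-sided function $B_x$, converts $B_{x,0}(a/b)$ to $B_x(a/b)+\tfrac{1}{2x}\lfloor x/b\rfloor$ via the jump formula \eqref{Bxjump}, and then imports the estimate $\bigl|\sum_{m\leq x}t_{a/b}(m)\bigr|\leq b(b+1)$ for $t_{a/b}(m)=1+2b\beta(am/b)$ from \cite[Lemma 2.2]{Ku2}; this keeps the lemma aligned with the Farey-fraction machinery of the earlier papers but leans on an external reference and gives a slightly weaker constant. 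Your argument is a clean, elementary replacement; the only point you rightly flag as essential is that $\beta_0$ (not $\beta$) is used, since $\beta(n)=-\tfrac12$ at integers would spoil the exact vanishing of the period sums.
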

\begin{proof}
Without loss of generality we may assume that $a \in \setZ$ and $b \in \setN$
are relatively prime. Then Lemma 2.1 in \cite{Ku2} states that
\begin{equation}\label{zeromean}
\sum \limits_{m=1}^{b} \left\lfloor \frac{a m}{b} \right \rfloor
=a\frac{b+1}{2}-\frac{b-1}{2}\,.
\end{equation}
We can also assume that $b\geq2$, since $B_{x,0}(0)=0$. 
For $m \in \setN$ we define the $b$-periodic sequence
$$ t_{a/b}(m)=1+2b\beta\left(\frac{am}{b}\right)=
2am-2b\left\lfloor\frac{am}{b}\right\rfloor-b+1\,.$$
Due to \eqref{zeromean} this sequence
has mean value zero over one period, i.e.
\begin{equation*}
 \sum \limits_{m \leq \, b} t_{a/b}(m)=0\,.
\end{equation*}

We follow \cite[Section 2]{Ku2}, regard that $|\beta(t)| \leq \frac12$ 
for $t \in \setR$ and obtain
\begin{equation*}
\begin{split}
&\max \limits_{k \in \setN} 
\left|
\sum \limits_{m \leq k} 
t_{a/b}(m)
\right|\\
&=\max \limits_{1 \leq k \leq b} 
\left|
\sum \limits_{m \leq k} 
\left(2b \beta\left(\frac{am}{b} \right)+1 \right)
\right|\\
& \leq \max \limits_{1 \leq k \leq b} 
\sum \limits_{m \leq k} (b+1) = b(b+1) \,.\\
\end{split}
\end{equation*}
We conclude for $x>0$ that 
\begin{equation}\label{tabschaetz}
\left|\sum \limits_{m \leq x} t_{a/b}(m)\right| \leq b(b+1)\,.
\end{equation}
Next we use \eqref{Bxjump} and obtain
\begin{equation*}
\begin{split}
B_{x,0}(a/b)&=
B_x(a/b)+\frac{1}{2x}\left\lfloor\frac{x}{b}\right\rfloor\\
&=\frac{1}{2bx}\sum \limits_{m \leq x}\left(t_{a/b}(m)-1 \right)+\frac{1}{2x}\left\lfloor\frac{x}{b}\right\rfloor\\
&=\frac{1}{2bx}\sum \limits_{m \leq x} t_{a/b}(m)+
\frac{1}{2x}\left(\left\lfloor\frac{\lfloor x \rfloor}{b}\right\rfloor-\frac{\lfloor x \rfloor}{b}\right)\,.\\
\end{split}
\end{equation*}
Hence we see from \eqref{tabschaetz} with $b \geq 2$ that
\begin{equation*}
\left| B_{x,0}(a/b) \right| \leq \frac{b+1}{2x}+\frac{1}{2x} \leq \frac{b}{x}\,.
\end{equation*}
\end{proof}

Using Theorem \ref{Bx_thm}(a), Lemma \ref{B0_rational}, \eqref{Bx0def}, \eqref{Bxjump}
and for $x \in \setR$ the symmetry relationship
\begin{equation*}
B_n\left(\frac{a}{b}-\frac{x}{bn}\right)=-B_{n}^-\left(\frac{b-a}{b}+\frac{x}{bn}\right)\,,
\end{equation*}
we obtain the following result, which has the counterparts \cite[Theorem 3.2]{Ku2}
and \cite[Theorem 2.2]{Ku4} in the theory of Farey fractions:

\begin{thm}\label{rescaled_limit}
Assume that $a/b \in \mathcal{F}^{ext}_n$ and put
$$
\tilde{\eta}_{a,b}(n,x)=b\,B_n\left(\frac{a}{b}+\frac{x}{bn}\right) \,,
\qquad x \in \setR\,.
$$
Then for $n \to \infty$ the sequence of functions $\tilde{\eta}_{a,b}(n,\cdot)$ converges uniformly 
on each interval $[-x_*,x_*]$, $x_*>0$ fixed, to the limit function $\tilde{\eta}$
in \eqref{etalimit}.
\end{thm}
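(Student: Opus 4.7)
The plan is to extract $\tilde{\eta}_{a,b}(n,x) - \tilde{\eta}(x)$ directly from the identity in Theorem \ref{Bx_thm}(a) and show that each remainder term vanishes as $n \to \infty$, uniformly for $x \in [-x_*, x_*]$. Multiplying the identity in part (a) by $b$ and rearranging, one obtains for $0 < x \leq n/b^*$
\begin{equation*}
\tilde{\eta}_{a,b}(n,x) - \tilde{\eta}(x)
= \bigl(b B_n(a/b) + \tfrac{1}{2}\bigr)
 - \tfrac{bx}{n}\,B_x^-\bigl(\tfrac{n/x - b^*}{b}\bigr)
 + \tfrac{x}{2n}
 + \tfrac{b}{n}\sum_{k \leq x}\beta\bigl(\tfrac{n - k b^*}{b}\bigr).
\end{equation*}
The three trailing summands are $O((b + b x_*)/n)$ uniformly on $[0,x_*]$, since $|\beta|, |B_x^-| \leq 1/2$ and the finite sum has at most $\lfloor x_* \rfloor$ terms.

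The central step is to control the leading bracket. Combining the definition \eqref{Bx0def} of $B_{n,0}$ with the jump formula \eqref{Bxjump} gives
\begin{equation*}
B_n(a/b) = B_{n,0}(a/b) - \tfrac{1}{2n}\lfloor n/b\rfloor,
\end{equation*}
so that $b B_n(a/b) + \tfrac{1}{2} = b\,B_{n,0}(a/b) + \tfrac{1}{2}\bigl(1 - \tfrac{b}{n}\lfloor n/b\rfloor\bigr)$. By Lemma \ref{B0_rational} the first summand is bounded by $b^2/n$ and the fractional-part correction by $b/(2n)$, hence the bracket is $O(1/n)$. Combined with the previous paragraph this establishes uniform convergence on $[0, x_*]$. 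The case $x=0$ is covered directly since $\tilde{\eta}_{a,b}(n,0) = b B_n(a/b) \to -\tfrac{1}{2} = \tilde{\eta}(0)$.

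For $x \in [-x_*, 0)$ I would reduce to the right-sided case via the symmetry relation stated just before the theorem: setting $y = -x > 0$,
\begin{equation*}
b B_n(a/b + x/(bn)) = -b B_n^-((b-a)/b + y/(bn)).
\end{equation*}
Since $\gcd(b-a,b) = 1$, the fraction $(b-a)/b$ again represents an element of $\mathcal{F}_b^{ext}$ and the first step applies there, giving $b B_n((b-a)/b + y/(bn)) \to \tilde{\eta}(y)$ uniformly on $[0, x_*]$. Combined with the oddness $\tilde{\eta}(-y) = -\tilde{\eta}(y)$, directly verifiable from \eqref{etalimit}, this yields the desired limit $\tilde{\eta}(x)$.

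The main obstacle I expect lies in the swap between $B_n^-$ and $B_n = B_n^+$ forced by this symmetry step: by \eqref{Bxjump}, the two differ by $-\tfrac{1}{n}\lfloor n/q\rfloor$ at any reduced fraction $p/q$ with $q \leq n$, and by $0$ elsewhere. One must verify that, on $[-x_*, x_*]$, every such correction besides the one at $x=0$ is of order $O(1/n)$. This follows from the observation that for any such $p/q \neq a/b$ one has $|x| = n b \,|p/q - a/b| \geq n/q$, so $\lfloor n/q\rfloor \leq x_*$ and the correction is $O(b x_*/n)$ uniformly. The remaining jump, occurring exactly at $x=0$ with $p/q = a/b$, $q = b$, has limiting size $-1$ and is aligned with the unique jump of $\tilde{\eta}$ on $[-x_*, x_*]$; hence the uniform convergence claim survives intact.
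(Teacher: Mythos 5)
Your argument is correct and follows essentially the same route the paper intends: the paper presents Theorem~\ref{rescaled_limit} as a direct consequence of Theorem~\ref{Bx_thm}(a), Lemma~\ref{B0_rational}, \eqref{Bx0def}, \eqref{Bxjump} and the stated symmetry relation, and your proof fills in precisely those steps in the same roles. The one point the paper leaves implicit, the $B_n^-$ versus $B_n=B_n^+$ discrepancy arising from the symmetry step, is handled correctly by your observation that any rational $p/q\neq a/b$ with $q\leq n$ hit by the rescaled window satisfies $\lfloor n/q\rfloor\leq x_*$, so the corresponding jump correction is $O(b x_*/n)$.
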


For the following two results we apply Theorem \ref{ostrowski} 
and recall \eqref{Snfinal}, \eqref{jstarabschaetz}.
Due to Theorem \ref{rescaled_limit} the functions $B_n$ cannot converge 
uniformly to zero on any given interval. Instead we have the following

\begin{thm}\label{B0_mass}
Let $\Theta : [1,\infty) \to [1,\infty)$ be monotonically increasing with
$\begin{displaystyle} \lim \limits_{n \to \infty} \Theta(n)=\infty\,.\end{displaystyle}$

We fix $n \in \setN$, $\varepsilon >0$, put $m=\lfloor 4 \log n\rfloor$,
use Definition \ref{vtdef}, recall $J_*=(0,1) \setminus \setQ$ and define 
$$\mathcal{M}_n=
\left\{t \in J_*\,:\,\vartheta_j(t) <1+\lfloor\Theta(n)\log n\rfloor \quad 
\mbox{for~all ~ } j=1,\ldots,m \right\}\,.
$$ 
Then $\begin{displaystyle}  \lim \limits_{n \to \infty}|\mathcal{M}_n|=1\end{displaystyle}$
and 
\begin{equation}
\label{betteresti}
\left| B_{n,0}(t) \right|=\left| B_{n}(t) \right| \leq 2 \, \frac{\log^2 n}{n}\,\Theta(n) 
\end{equation}
for all $n \geq 3$ and all $t \in \mathcal{M}_n$.
\end{thm}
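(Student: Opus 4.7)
The statement has two parts: an assertion on $|\mathcal{M}_n|$ and a pointwise bound. I first observe that every $t \in \mathcal{M}_n \subset J_*$ is irrational, so $kt \notin \setZ$ for all $k \in \setN$ and therefore $\beta(kt) = \beta_0(kt)$; hence the identity $B_n(t) = B_{n,0}(t)$ in \eqref{betteresti} is automatic. Throughout I shall write $\alpha := 1 + \lfloor \Theta(n)\log n \rfloor$, so that the constraint $\vartheta_j < \alpha_j$ defining $\mathcal{M}_n$ reads $\vartheta_j < \alpha$ for $j = 1, \ldots, m$.

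For the measure estimate I would apply the lower bound of Theorem \ref{kettenabschaetz} with $\alpha_1 = \cdots = \alpha_m = \alpha$. Using $\alpha \geq \Theta(n)\log n$ and $m \leq 4\log n$, together with the elementary inequality $\log(1-x) \geq -2x$ valid for $0 \leq x \leq 1/2$, one obtains, for $n$ large enough that $\alpha \geq 2$,
$$
|\mathcal{M}_n| \;\geq\; \left(1 - \tfrac{1}{\alpha}\right)^{2m} \;\geq\; \exp\!\left(-\tfrac{4m}{\alpha}\right) \;\geq\; \exp\!\left(-\tfrac{16}{\Theta(n)}\right),
$$
which tends to $1$ because $\Theta(n) \to \infty$.

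For the pointwise bound I would feed the defining inequality of $\mathcal{M}_n$ into Ostrowski's chain. With $t = \langle 0, \lambda_1, \lambda_2, \ldots \rangle$, estimate \eqref{Snfinal} reads $n|B_n(t)| \leq \tfrac{1}{2} \sum_{k=1}^{j_*} \lambda_k$. By \eqref{jstarabschaetz} one has $j_* \leq 4\log n$, and since $j_*$ is an integer this forces $j_* \leq \lfloor 4\log n\rfloor = m$. Consequently every index $k$ appearing in the Ostrowski sum lies in the range controlled by $\mathcal{M}_n$, and since $\alpha$ is an integer
$$
\lambda_k \;=\; \lfloor \vartheta_k \rfloor \;\leq\; \alpha - 1 \;=\; \lfloor \Theta(n)\log n \rfloor \;\leq\; \Theta(n)\log n.
$$
Combining these bounds yields $n|B_n(t)| \leq \tfrac{1}{2}\cdot 4\log n \cdot \Theta(n)\log n = 2\,\Theta(n)\log^2 n$, which is \eqref{betteresti}.

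The main obstacle is the coordinated choice of the three quantities $m$, $j_*$ and $\alpha$. The crucial observation is that $m = \lfloor 4\log n\rfloor$ simultaneously (i) covers every index produced by Ostrowski's reduction, because $j_*$ is an integer bounded by $4\log n$, (ii) is small enough relative to $\alpha$ to keep $|\mathcal{M}_n|$ close to $1$, since $m/\alpha = \grosso(1/\Theta(n))$, and (iii) combines with the upper bound $\alpha - 1$ on each $\lambda_k$ to produce precisely the $\Theta(n)\log^2 n$ factor demanded by \eqref{betteresti}. Apart from this bookkeeping, the proof is a direct combination of the two main tools already established, namely Theorems \ref{kettenabschaetz} and \ref{ostrowski}.
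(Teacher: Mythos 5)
Your proof is correct and follows essentially the same route as the paper: Theorem \ref{kettenabschaetz} with all $\alpha_j$ equal to $1+\lfloor\Theta(n)\log n\rfloor$ for the measure bound, and Ostrowski's recursion via \eqref{Snfinal} together with the integrality argument $j_*\leq\lfloor 4\log n\rfloor=m$ from \eqref{jstarabschaetz} for the pointwise bound. Your only additions are the explicit exponential lower bound $\exp(-16/\Theta(n))$ for $|\mathcal{M}_n|$ (the paper just notes the last product tends to $1$) and the observation that irrationality of $t$ makes $B_n(t)=B_{n,0}(t)$ automatic; both are fine.
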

\begin{proof}
We apply Ostrowski's Theorem on any number $t \in \mathcal{M}_n$ with continued fraction expansion
$t = \langle 0,\lambda_1,\lambda_2,\ldots \rangle$ and 
obtain $j_* \leq m$ from \eqref{jstarabschaetz}, since $j_*$ is an integer number. 
From $j_* \leq m$ and $t \in \mathcal{M}_n$
we conclude that $\lambda_k \leq \Theta(n) \log n$ for $k=1,\ldots,j_*$,
and the desired inequality follows with \eqref{Snfinal}\,.
The first statement follows from Theorem \ref{kettenabschaetz} via
\begin{equation*}
\begin{split}
|\mathcal{M}_n| &\geq \left(1-\frac{1}{ 1+\lfloor\Theta(n)\log n\rfloor} \right)^{2m}\\
&\geq \left(1-\frac{1}{\Theta(n)\log n} \right)^{2m}
\geq \left(1-\frac{1}{\Theta(n)\log n} \right)^{8\log n}\,,\\
\end{split}
\end{equation*}
since the right-hand side tends to $1$ for $n \to \infty$.
\end{proof}

\begin{rem} The sets $\mathcal{M}_n$ in the previous theorem are chosen in such a way that 
the large values $B_n(t)$ from the peaks of the rescaled limit function
around  the rational numbers with small denominators
predicted by Theorem \ref{rescaled_limit} can only occur in the small complements 
$J_* \setminus \mathcal{M}_n$ of these sets.
However, the quality of the estimates of the values $B_n(t)$ 
on the sets $\mathcal{M}_n$ depends on the
different choices of the growing function $\Theta$. For example,
$\Theta(n)=1+\log\left(1+\log n\right)$ gives a much smaller bound
than $\Theta(n)=16 \sqrt{n}/(4+\log n)^2$, whereas the latter choice leads to 
a much smaller value of $|J_* \setminus \mathcal{M}_n|=1-|\mathcal{M}_n|$\,.
\end{rem}

\begin{thm}\label{B0_mass_almost_everywhere}
Let $\Theta : [1,\infty) \to [1,\infty)$ be monotonically increasing with
$\begin{displaystyle} \lim \limits_{n \to \infty} \Theta(n)=\infty\,.\end{displaystyle}$
We fix $n \in \setN$, $\varepsilon >0$, use Definition \ref{vtdef}, recall $J_*=(0,1) \setminus \setQ$ and put
$$\tilde{\mathcal{M}_n}=
\left\{t \in J_*\,:\,\vartheta_j(t) < 1+\lfloor\Theta(n)j^{1+\varepsilon}\rfloor \quad\text{for~all~} j \in \setN \right\}\,.
$$ 
Then $|\tilde{\mathcal{M}}|=1$ for 
$\tilde{\mathcal{M}}= \bigcup \limits_{n=1}^{\infty}\tilde{\mathcal{M}_n}$, 
and for all $t \in \tilde{\mathcal{M}}$ there exists an index $n_0=n_0(t,\varepsilon)$ with
$$\left| B_{n,0}(t) \right|=\left| B_{n}(t) \right| 
\leq \frac{(4\log n)^{2+\varepsilon}}{2n}\,\Theta(n)
\quad \mbox{for ~ all ~} n \geq n_0\,.
$$ 
The complement $J_* \setminus \tilde{\mathcal{M}}$ is an uncountable null set
which is dense in the unit interval $(0,1)$.
\end{thm}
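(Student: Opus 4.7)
The plan is to establish the three assertions of the theorem in order: first $|\tilde{\mathcal{M}}|=1$, then the pointwise bound on $B_n(t)$, and finally the density and uncountability of the complement. Since $\Theta$ is monotonically increasing, the thresholds $\alpha_j^{(n)}:=1+\lfloor\Theta(n)j^{1+\varepsilon}\rfloor$ increase in $n$ for each fixed $j$, so the sets $\tilde{\mathcal{M}}_n$ are nested, giving $|\tilde{\mathcal{M}}|=\lim_{n\to\infty}|\tilde{\mathcal{M}}_n|$. First I would apply Theorem \ref{kettenabschaetz} to the truncated conditions $\vartheta_j<\alpha_j^{(n)}$ for $j=1,\ldots,m$ and pass to $m\to\infty$ to obtain
\[
|\tilde{\mathcal{M}}_n|\geq\prod_{j=1}^{\infty}\left(1-\frac{1}{\alpha_j^{(n)}}\right)^{2}.
\]
Because $\alpha_j^{(n)}\geq\Theta(n)j^{1+\varepsilon}$, the series $\sum_{j\geq 1}1/\alpha_j^{(n)}$ is dominated by $\zeta(1+\varepsilon)/\Theta(n)$, which tends to $0$ with $n$; combined with $\log(1-x)\geq -2x$ for small $x$, this forces the product to $1$, yielding $|\tilde{\mathcal{M}}|=1$ and $|J_*\setminus\tilde{\mathcal{M}}|=0$.

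For the pointwise bound I would fix $t\in\tilde{\mathcal{M}}$. By nestedness there is an index $n_0=n_0(t,\varepsilon)\geq 3$ with $t\in\tilde{\mathcal{M}}_n$ for all $n\geq n_0$, whence $\lambda_k(t)=\lfloor\vartheta_k(t)\rfloor\leq\Theta(n)k^{1+\varepsilon}$ for every $k$. Iterating Ostrowski's Theorem \ref{ostrowski} and invoking \eqref{Snfinal} and \eqref{jstarabschaetz} then gives
\[
n|B_n(t)|\leq\frac{1}{2}\sum_{k=1}^{j_*}\lambda_k\leq\frac{\Theta(n)}{2}\sum_{k=1}^{j_*}k^{1+\varepsilon}\leq\frac{\Theta(n)}{2}j_*^{2+\varepsilon}\leq\frac{\Theta(n)(4\log n)^{2+\varepsilon}}{2},
\]
using $j_*\leq 4\log n$. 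The identity $B_{n,0}(t)=B_n(t)$ follows because $B_n$ is continuous at the irrational point $t$.

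For the complement I would use the cylinders $J(\mu_1,\ldots,\mu_m)$ introduced in the proof of Theorem \ref{kettenabschaetz}, whose lengths \eqref{kette5} tend to $0$ with $m$. Given any open interval $I\subseteq(0,1)$, I can choose $(\mu_1,\ldots,\mu_m)$ so that $J(\mu_1,\ldots,\mu_m)\subset I$. Any completion $t=\langle 0,\mu_1,\ldots,\mu_m,\lambda_{m+1},\lambda_{m+2},\ldots\rangle$ with tail $\lambda_j\geq 2^j$ for $j>m$ lies in $I$ and has $\lambda_j/j^{1+\varepsilon}\to\infty$; for each fixed $n$ some index $j$ then satisfies $\vartheta_j\geq\lambda_j\geq 1+\lfloor\Theta(n)j^{1+\varepsilon}\rfloor$, so $t\notin\tilde{\mathcal{M}}_n$ for every $n$ and hence $t\in J_*\setminus\tilde{\mathcal{M}}$. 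Choosing each $\lambda_j$ independently in $\{2^j,2^j+1\}$ for $j>m$ produces uncountably many such $t$ inside $I$.

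The main obstacle is the first step: Theorem \ref{kettenabschaetz} only bounds the measure of finite truncations, so one must carefully pass to the infinite product and verify that it approaches $1$ as $n\to\infty$. The remaining two parts are essentially direct---a single invocation of Ostrowski's iterative formula for the pointwise bound, and an explicit construction of tails with super-polynomial growth for the complement.
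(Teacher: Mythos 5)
Your proposal is correct and follows essentially the same route as the paper: nested sets $\tilde{\mathcal{M}}_n$, the truncation-and-limit argument feeding Theorem \ref{kettenabschaetz} into an infinite product that tends to $1$, and the combination of \eqref{Snfinal} with \eqref{jstarabschaetz} for the pointwise bound. Your treatment of the complement via cylinders $J(\mu_1,\ldots,\mu_m)\subset I$ with tails $\lambda_j\in\{2^j,2^j+1\}$ is in fact more explicit than the paper's brief remark, but it is the same underlying idea (partial quotients growing faster than any polynomial).
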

\begin{proof}
The function $\Theta$ is monotonically increasing, hence
$
\tilde{\mathcal{M}_1} \subseteq \tilde{\mathcal{M}_2}\subseteq \tilde{\mathcal{M}_3}\ldots\,,
$
and we have
\begin{equation}\label{kettenmass1}
|\tilde{\mathcal{M}}|=
\lim \limits_{n \to \infty} |\tilde{\mathcal{M}_n}|\,.
\end{equation}
For all $n,k \in \setN$ we define
$$\tilde{\mathcal{M}_{n,k}}=
\left\{t \in J_*\,:\,\vartheta_j(t) < 1+\lfloor\Theta(n)j^{1+\varepsilon}\rfloor 
\quad\text{for~all~} j =1,\ldots,k \right\}\,.
$$ 
Then
$\tilde{\mathcal{M}_n}= \bigcap \limits_{k=1}^{\infty}\tilde{\mathcal{M}_{n,k}}$ 
and 
\begin{equation}\label{kettenmass2}
|\tilde{\mathcal{M}_n}|=
\lim \limits_{k \to \infty} |\tilde{\mathcal{M}_{n,k}}|
\end{equation}
from
$
\tilde{\mathcal{M}_{n,1}} \supseteq \tilde{\mathcal{M}_{n,2}}\supseteq \tilde{\mathcal{M}_{n,3}}\ldots\,.
$
It follows from Theorem \ref{kettenabschaetz} for all $n,k \in \setN$ that
\begin{equation*}
|\tilde{\mathcal{M}_{n,k}}|
\geq \prod \limits_{j=1}^{k}\left(1-\frac{1}{\Theta(n)j^{1+\varepsilon}} \right)^2
\geq \prod \limits_{j=1}^{\infty}\left(1-\frac{1}{\Theta(n)j^{1+\varepsilon}} \right)^2\,.
\end{equation*}
The product on the right-hand side is independent of $k$ and converges to $1$
for $n \to \infty$, hence $|\tilde{\mathcal{M}}|=1$ from
\eqref{kettenmass1}, \eqref{kettenmass2}\,.
Each rational number in the interval $(0,1)$ is arbitrarily close to a member of
the complement $J_* \setminus \tilde{\mathcal{M}}$, and the complement contains
all $t=\langle 0,\lambda_1,\lambda_2,\lambda_3,\ldots \rangle$
for which $(\lambda_j)_{j \in \setN}$ increases faster then any polynomial.
We conclude that $J_* \setminus \tilde{\mathcal{M}}$
is an uncountable null set which is dense in the unit interval $(0,1)$.
Now we choose $t \in \tilde{\mathcal{M}}$ and obtain $n_0 \in \setN$
with $t \in \tilde{\mathcal{M}}_{n_0}$. Then $t \in \tilde{\mathcal{M}}_{n}$
for all $n \geq n_0$, and we may assume that $n_0 \geq 3$.
Note that $n_0$ may depend on $t$ as well as on $\varepsilon$.
We have $t = \langle 0,\lambda_1,\lambda_2,\lambda_3,\ldots \rangle$ and
$$
 \lambda_j \leq \Theta(n)j^{1+\varepsilon}
$$
for all $n \geq n_0$ and all $j \in \setN$. 
We finally obtain from \eqref{Snfinal}, \eqref{jstarabschaetz} that
$$
n|B_n(t)| = |S(n,t)| \leq \frac12 \, \sum \limits_{k=1}^{j_*} \lambda_k
\leq \frac{j_*}{2}\Theta(n)j_*^{1+\varepsilon}
\leq \frac12 \Theta(n) (4\log n)^{2+\varepsilon}\,, \quad n \geq n_0\,.
$$
\end{proof}

\begin{rem}
We replace $\varepsilon$ by $\varepsilon/2$,
choose $\Theta(n)=1+\log\left(1+\log n\right)$ in the previous theorem
and obtain the following result of Lang, see \cite{Lang1966}
and \cite[III,\S 1]{Lang1995} for more details:
For $\varepsilon > 0$ and almost all $t \in \setR$ we have 
\begin{equation*}
|S(n,t)| \leq\left(\log n \right)^{2+\varepsilon} \quad
\mbox{~for ~} n \geq n_0(t,\varepsilon)
\end{equation*}
with a constant $n_0(t,\varepsilon) \in \setN$.
Here the sum $S(n,t)$ is given by \eqref{Sxdef}\,.
This doesn't contradict Theorem \ref{Bx_L2}, because the pointwise estimates of $S(n,t)$ and
$B_n(t)$ in Theorem \ref{B0_mass_almost_everywhere} are only valid for sufficiently large values of $n \geq n_0(t,\varepsilon)$, depending on the choice of $t$ and $\varepsilon$.

We conclude from Theorem \ref{B0_mass} that the major contribution of $\|B_n\|_2$
comes from the small complement of $\mathcal{M}_n$. Indeed, the crucial point
in Theorem \ref{B0_mass} is that it holds for \textit{all} $n \geq 3$, but
not so much the fact that the upper bound in estimate \eqref{betteresti} is slightly better
than that in Theorem \ref{B0_mass_almost_everywhere}.
\end{rem}

For $k \in \setN$ and $x>0$ the 1-periodic
functions $q_{k,0},\Phi_{x,0} : \setR \to \setR$ 
corresponding to \eqref{familien} are 
defined as follows:
\begin{equation*}
q_{k,0}(t) =-\sum \limits_{d | k} \mu(d)\,
\beta_{0} \left(\frac{kt}{d}\right) ~\,,
\end{equation*}
\begin{equation*}
\Phi_{x,0}\left(t \right)= \frac{1}{x}\,\sum \limits_{k \leq x} q_{k,0}(t)
=- \frac{1}{x}\sum \limits_{j \leq x}
\sum \limits_{k \leq x/j}\mu(k)\,\beta_{0}\left(jt \right)\,.
\end{equation*}

In the half-plane $H = \{ s \in \setC\,:\,\Re(s)>1\}$
the parameter-dependent Dirichlet series $F_{\beta}, F_{q} : \setR \times H \to \setC$ are given by
\begin{equation*}
F_{\beta}(t,s) =\sum \limits_{k=1}^{\infty} \frac{\beta_{0}(kt)}{k^s} \,,\quad
F_{q}(t,s) =\sum \limits_{k=1}^{\infty} \frac{q_{k,0}(t)}{k^s} \,.
\end{equation*}
Now Theorem \ref{B0_mass_almost_everywhere} and \eqref{Hecke_series} immediately gives
\begin{thm}\label{F_thm}
For $t \in \setR$ and $\Re(s)>1$ we have with absolutely convergent series and integrals
\begin{itemize}
\item[(a)] 
\begin{equation*}
\begin{split}
\frac{1}{s}\,F_{\beta}(t,s)=\int \limits_{1}^{\infty} B_{x,0}(t)\frac{dx}{x^s}\,,\quad
\frac{1}{s}\,F_{q}(t,s)=\int \limits_{1}^{\infty} \Phi_{x,0}(t)\frac{dx}{x^s}\,.\\
\end{split}
\end{equation*}
\item[(b)] 
\begin{equation*}
F_{q}(t,s)=-\frac{1}{\zeta(s)}\,F_{\beta}(t,s)\,.
\end{equation*}
\end{itemize}
For almost all $t$ the function $F_{\beta}(t,\cdot)$ 
has an analytic continuation to the half-plane $\Re(s)>0$\,.
\end{thm}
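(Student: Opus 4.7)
The plan is to establish parts (a) and (b) by direct manipulation in the half-plane $H=\{\Re(s)>1\}$ and then to derive the analytic continuation from (a) combined with the almost-everywhere pointwise estimate of Theorem \ref{B0_mass_almost_everywhere}.

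For (a) I would invoke the Mellin-type identity $k^{-s}=s\int_k^\infty x^{-s-1}\,dx$, valid for $\Re(s)>0$, and interchange summation and integration:
\begin{equation*}
\int_1^\infty B_{x,0}(t)\,\frac{dx}{x^s}
=\int_1^\infty \frac{1}{x^{s+1}}\sum_{k\leq x}\beta_0(kt)\,dx
=\sum_{k=1}^\infty \beta_0(kt)\int_k^\infty \frac{dx}{x^{s+1}}
=\frac{F_\beta(t,s)}{s}.
\end{equation*}
Since $|\beta_0(kt)|\leq 1/2$ and $|B_{x,0}(t)|\leq 1/2$, both the series $F_\beta(t,s)$ and the outer integral converge absolutely on $H$, and Fubini applies; the identity for $F_q$ is structurally identical. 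For (b) I would substitute the definition of $q_{k,0}$, reindex via $k=dm$, and factor the resulting absolutely convergent double series:
\begin{equation*}
F_q(t,s)=-\sum_{d=1}^\infty\sum_{m=1}^\infty \frac{\mu(d)\beta_0(mt)}{(dm)^s}
=-\Bigl(\sum_{d=1}^\infty \frac{\mu(d)}{d^s}\Bigr)\Bigl(\sum_{m=1}^\infty \frac{\beta_0(mt)}{m^s}\Bigr)
=-\frac{F_\beta(t,s)}{\zeta(s)},
\end{equation*}
using $\sum_d \mu(d)/d^s=1/\zeta(s)$ in $H$.

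For the analytic continuation I would feed Theorem \ref{B0_mass_almost_everywhere} into the integral representation of (a). Choosing $\Theta(n)=1+\log(1+\log n)$ and any fixed $\varepsilon\in(0,1)$, that theorem furnishes a full-measure set $\tilde{\mathcal{M}}\subseteq(0,1)$ such that for every $t\in\tilde{\mathcal{M}}$ there are constants $n_0(t),C(t)>0$ with $|B_n(t)|\leq C(t)(\log n)^{2+\varepsilon}/n$ for all $n\geq n_0(t)$. Since $B_x(t)=(\lfloor x\rfloor/x)B_{\lfloor x\rfloor}(t)$ for $x\geq 1$, this bound transfers to $|B_{x,0}(t)|\leq C(t)(\log x)^{2+\varepsilon}/x$ for all real $x\geq n_0(t)$, and by $1$-periodicity of $B_{x,0}$ in $t$ it holds for almost every $t\in\setR$. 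I would then split
\begin{equation*}
\int_1^\infty B_{x,0}(t)\,\frac{dx}{x^s}
=\int_1^{n_0(t)} B_{x,0}(t)\,\frac{dx}{x^s}
+\int_{n_0(t)}^\infty B_{x,0}(t)\,\frac{dx}{x^s};
\end{equation*}
the first summand is entire in $s$, while the second converges absolutely and locally uniformly on $\{\Re(s)>0\}$. Morera's theorem (or differentiation under the integral sign) yields a holomorphic function on $\{\Re(s)>0\}$, and since this function coincides with $F_\beta(t,s)/s$ on $H$ by (a), the identity $F_\beta(t,s)=s\int_1^\infty B_{x,0}(t)x^{-s}\,dx$ delivers the desired analytic continuation.

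The main obstacle is confined to the third step: one must verify that the integer-indexed estimate of Theorem \ref{B0_mass_almost_everywhere} transfers through the relation $B_x(t)=(\lfloor x\rfloor/x)B_{\lfloor x\rfloor}(t)$ to a bound of the shape $(\log x)^{2+\varepsilon}/x$ for the continuous parameter $x$, and that this bound produces a locally uniform $L^1$-majorant on compact subsets of $\{\Re(s)>0\}$. Both verifications are routine; the factor $1/s$ in (a) causes no trouble because its pole at $s=0$ lies outside the target half-plane and is absorbed by multiplying through by $s$.
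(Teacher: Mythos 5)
Your proposal is correct and follows essentially the same route the paper intends: the Mellin identity $k^{-s}=s\int_k^\infty x^{-s-1}\,dx$ with Fubini for part (a), Dirichlet-series factorization via $\sum_d\mu(d)d^{-s}=1/\zeta(s)$ for part (b), and the almost-everywhere bound of Theorem \ref{B0_mass_almost_everywhere} fed into the integral representation to continue $F_\beta(t,\cdot)$ to $\Re(s)>0$. The only cosmetic point is that for the $F_q$ case the majorants are $|q_{k,0}(t)|\leq d(k)/2$ and $|\Phi_{x,0}(t)|=\mathcal{O}(\log x)$ rather than $1/2$, which still give absolute convergence for $\Re(s)>1$.
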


\section{Appendix: Plots of the limit functions $h$ and $\tilde{\eta}$}\label{appendix}
\begin{center}
\begin{figure}[H]
\includegraphics[width=0.7\textwidth]{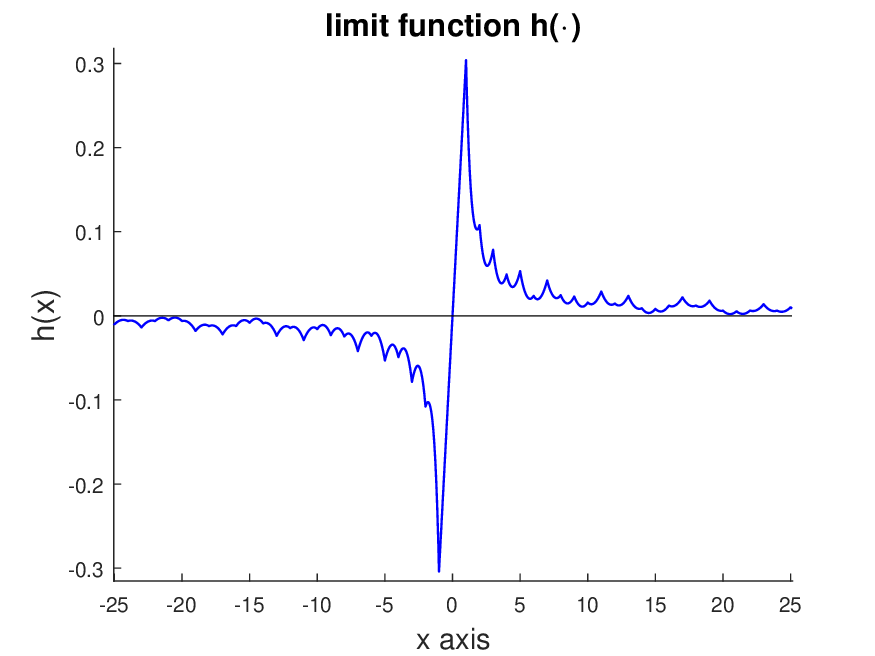}
\caption{Plot of $h(x)$ for $-25 \leq x \leq 25$\,. \label{h25eps}}
\end{figure}
\end{center}
\begin{center}
\begin{figure}[H]
\includegraphics[width=0.7\textwidth]{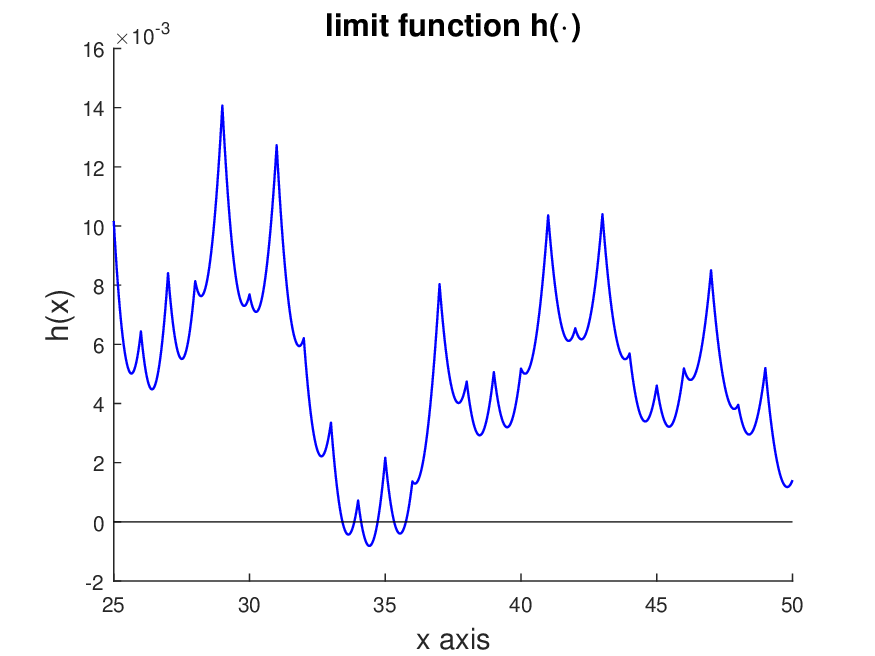}
\caption{Plot of $h(x)$ for $25 \leq x \leq 50$\,.  \label{h50eps}}
\end{figure}
\end{center}
\begin{center}
\begin{figure}[H]
\includegraphics[width=0.7\textwidth]{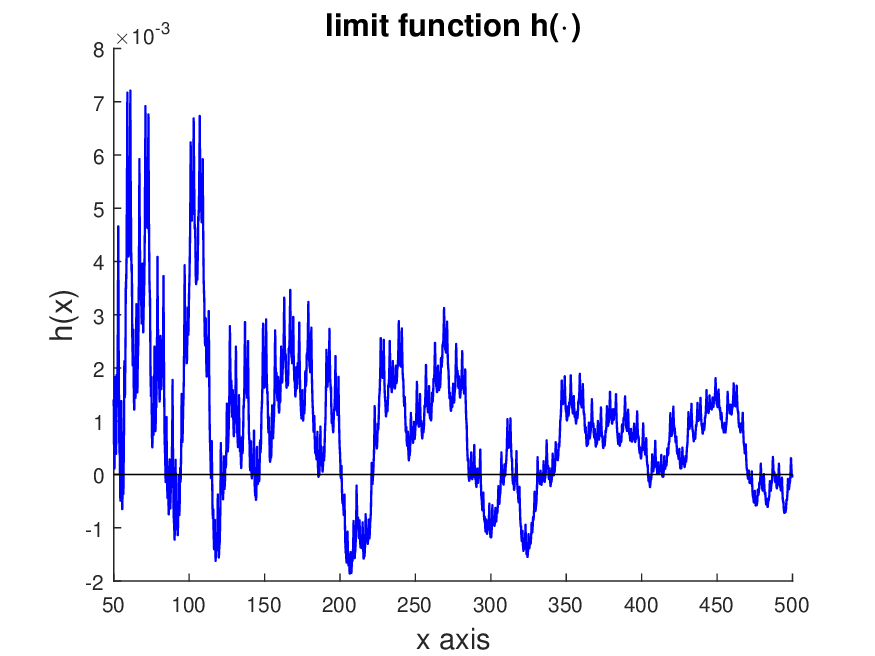}
\caption{Plot of $h(x)$ for $50 \leq x \leq 500$\,. \label{h500eps}}
\end{figure}
\end{center}
\begin{center}
\begin{figure}[H]
\includegraphics[width=0.7\textwidth]{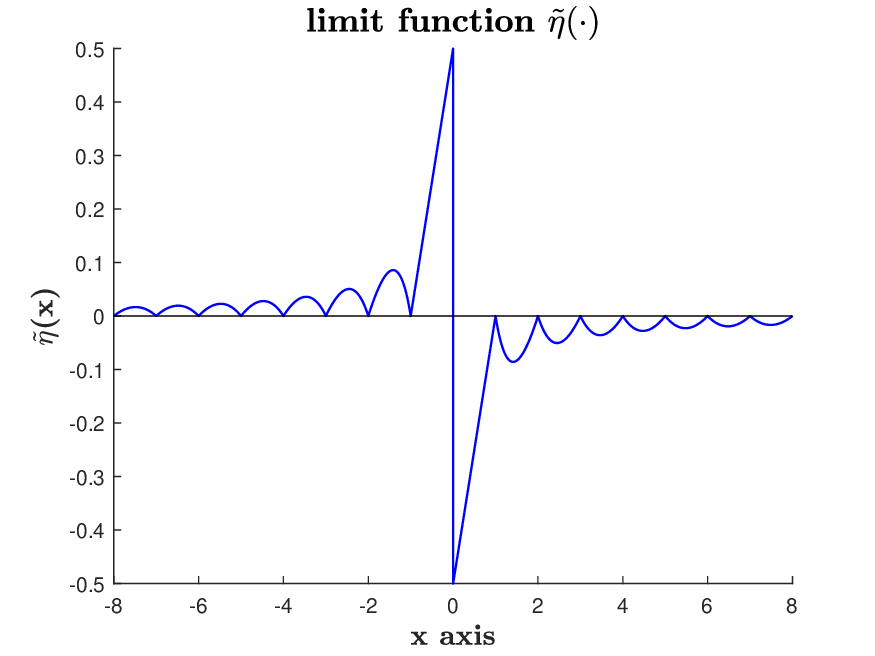}
\caption{Plot of $\tilde{\eta}(x)$ for $-8 \leq x \leq 8$\,. \label{eta8eps}}
\end{figure}
\end{center}


\end{document}